\newcounter{ExacSeq}
\newcommand{\stkout}[1]{\ifmmode\text{\sout{\ensuremath{#1}}}\else\sout{#1}\fi}
\newcommand{\Title}{Title}
\numberwithin{equation}{section}
\theoremstyle{definition}\newtheorem{definition}{Definition}[section]
	\newtheorem{defititle}[definition]{\Title}
	\newtheorem{notation}[definition]{Notation}
	\newtheorem{remark}[definition]{Remark}
	\newtheorem{ex}[definition]{Example}
	\newtheorem{exs}[definition]{Examples}}
\newtheorem{prop}[definition]{Proposition}
\newtheorem{proposition-definition}[definition]{Proposition-Definition}
\newtheorem{lemma}[definition]{Lemma}
\newtheorem{thm}[definition]{Theorem}
\newtheorem{cor}[definition]{Corollary}
\newtheorem{note}[definition]{Note}
\newtheorem*{prop*}{Proposition}
\newtheorem*{theorem*}{Theorem}
\newcommand{\cD}{\mathcal{D}}
\newcommand{\cG}{\mathcal{G}}
\newcommand{\cF}{\mathcal{F}}
\newcommand{\id}{{\hbox{id}}}
\newcommand{\ie}{{\it i.e.}\/ }
\newcommand{\cf}{{\it cf.}\/ }
\newcommand{\vX}{\mathfrak{X}}
\def\gpd{\,\lower1pt\hbox{$\longrightarrow$}\hskip-.24in\raise2pt
	\hbox{$\longrightarrow$}\,}
\renewcommand{\latticebody}{\drop@{ }}
\newcommand{\N}{\ensuremath{\mathbb N}}
\newcommand{\R}{\ensuremath{\mathbb R}}
\newcommand{\g}{\ensuremath{\mathfrak{g}}}
\newcommand{\h}{\ensuremath{\mathfrak{h}}}
\newcommand{\cX}{\mathcal{X}}
\newcommand{\RR}{\ensuremath{\mathbb R}}
\newcommand{\bt}{\mathbf{t}}                  
\newcommand{\bs}{\mathbf{s}}                  
\newcommand{\rar}[1]{\overset{\rightarrow}{#1}}
\newcommand{\lar}[1]{\overset{\leftarrow}{#1}}
\def\act{\mathbin{\hbox{$<\kern-.4em\mapstochar\kern.4em$}}}
\def\ract{\mathbin{\hbox{$\mapstochar\kern-.3em>$}}}
\def\bb{\boldsymbol{\beta}}
\def\PB(#1,#2,#3,#4){\left\{\begin{matrix}#1&\!\!\!\stackrel{?}{\longrightarrow}&\!\!\!#2\\
		\downarrow&&\!\!\!\downarrow\\
		#3&\!\!\!\stackrel{?}{\longrightarrow}&\!\!\!#4\end{matrix}\right\}}
\def\pb(#1,#2,#3,#4){ \hom(#1 \to #3, #2 \to #4)}
\newcommand{\pd}[1]{\frac{\partial}{\partial #1}} 
\newcommand{\la}{\langle}
\newcommand{\ra}{\rangle}
\begin{document}
	
	
	\begin{center}
		{\Large\bf Coordinates for non-integrable Lie algebroids\footnote{AMS subject classification: 22A22,~ Secondary  53C29, 53C12. Keywords: Bi-submersions, Weinstein groupoid.}
			
			\bigskip
			
			{\sc by Iakovos Androulidakis}
		}
		
	\end{center}

	{\footnotesize
		\vskip -2pt National and Kapodistrian University of Athens
		\vskip -2pt Department of Mathematics
		\vskip -2pt Panepistimiopolis
		\vskip -2pt GR-15784 Athens, Greece
		\vskip -2pt e-mail: \texttt{iandroul@math.uoa.gr}
	}
	\bigskip
	
	\begin{center}
	\textit{Dedicated to the memory of my teacher, Kirill C. H. Mackenzie.} 
	\end{center}

	\begin{abstract}
	We construct local coordinates for the Weinstein groupoid of a non-integrable Lie algebroid. To this end, we reformulate the notion of bi-submersion in a completely algebraic way and prove the existence of bi-submersions as such for the Weinstein groupoid. This implies that a $C^*$-algebra can be attached to every Lie algebroid.
	\end{abstract}
	
	
	
	
	
	\setcounter{tocdepth}{2} 
	\tableofcontents
	
	\section*{Introduction}
	
	Groupoids are very useful tools to understand the internal and external symmetries of various examples of dynamical systems, such as symplectic and Poisson structures, also foliations. Their power lies in the application of the following functors, which allow us to study a dynamical system from the classical and the quantum viewpoint.
\begin{itemize}
\item The ``Lie'' functor. That is, differentiating a groupoid to its algebroid. (One might also speak about the ``integration'' functor, that is integrating an algebroid to a groupoid).
\item The ``$C^{\ast}$'' functor: That is, attaching a\footnote{Usually one is interested in attaching a pair of $C^*$-algebras to a groupoid, namely the ``full'' and the ``reduced'' one.} $C^{\ast}$-algebra to a groupoid.
\end{itemize}
	
	By and large, groupoids arise from a process similar to the one introduced by Sophus Lie for differential equations, which eventually lead to the notion of a Lie group. In analogy with the case of Lie algebras, one can always integrate a Lie algebroid $A$ over $M$ to a local Lie groupoid, namely a smooth manifold $G$ with two submersions $G \to M$, endowed with a smooth partial multiplication which is defined only near the identity. However, unlike the intuition from Lie's third theorem, having a groupoid with global partial multiplication often  comes at the expense of its smoothness. 
	
	Recall that the work of Almeida and Molino \cite{AM} gives the first counterexample of a transitive Lie algebroid which does not integrate to a Lie groupoid. Mackenzie \cite{MK2} gave the cohomological obstruction to integrability in the transitive case. The work of Cattaneo and Felder \cite{CatFeld} shows that in Poisson geometry we can obtain a source-simply connected topological groupoid with globally defined partial multiplication, but its topology may not be differentiable. In the work of Crainic and Fernandes \cite{CrFeLie} it is shown that such a groupoid exists for any Lie algebroid $A$ and be realised as a quotient of a space of paths on $A$ ($A$-paths), in analogy with the treatment given in \cite{DuistKolk} for Lie algebras. The name ``Weinstein groupoid of $A$'' and the notation $W(A) \gpd M$ for this groupoid, were coined in this work. Moreover, the explicit obstructions to the differentiability of $W(A)$ were given, generalising the work of Mackenzie.  
	\begin{sloppypar}
	Topological groupoids arise in foliation theory as well. The holonomy groupoid of a foliation is a topological groupoid as such. It can be defined \cite{AndrSk} for every foliation, even when the dimension of the leaves is not constant (singularities). However, it is a Lie groupoid only when the foliation is ``almost regular'' \cite{DebordJDG}. Zambon \cite{Za18} generalised the construction of the holonomy groupoid to singular versions of Lie subalgebroids as well. Note that the notion of Lie algebroid is relaxed in both \cite{AndrSk} and \cite{Za18}. Namely, the rank is allowed to be a semi-continuous function rather than constant.
	\end{sloppypar}
Both of the ``Lie'' and ``$C^{\ast}$'' functors apply to the category of Lie groupoids. Beyond this category, their application is not always possible: For local Lie groupoids the $C^{\ast}$ functor fails. For general locally compact topological groupoids (with a Haar system), the $C^{\ast}$ functor does apply but the Lie functor is meaningless, unless the topology is differentiable.

Although the topology of the groupoids constructed in \cite{AndrSk}  \cite{Za18} is extremely pathological, both functors can be applied on them (\cf \cite{AnZa20}, \cite[Appendix]{Za18}). The reason for this is that these are \textit{diffeological} groupoids, namely they have well behaved (smooth) local coordinate systems, although their dimension varies. These local coordinates are called \textit{bi-submersions} and were introduced in \cite{AndrSk}.


\subsection*{Coordinates for the Weinstein groupoid}

In this note we show that the Weinstein groupoid is a diffeological groupoid as well. In other words, we construct bi-submersions for $W(A)$. This is the crucial step towards applying both functors to $W(A)$, in particular the $C^*$-algebra construction given in \cite[Appendix]{Za18}. 

Our construction starts from the following observations. The anchor map induces a morphism of $C^{\infty}(M)$-modules $\Gamma_c(A) \to \cX_c(M)$ which preserves the Lie brackets. The image $\cF$ of this map is exactly the singular foliation of the base manifold $M$ induced by the Lie algebroid $A$. That is, $(M,\cF)$ is a singular foliation in the sense of \cite{AndrSk} (a locally finitely generated  and involutive submodule of $\cX_c(M)$). We obtain a short exact sequence of $C^{\infty}(M)$-modules: $$0 \to \h \to \Gamma_c(A) \to \cF \to 0$$ The following observations are crucial:
\begin{itemize}
\item The kernel $\h$ induces the foliation by points on $M$ (see \S \ref{sec:extensions}). 
\item The foliation $(M,\cF)$ integrates (in the sense of \cite{AnZa20}) to the holonomy groupoid $H(\cF)$ constructed in \cite{AndrSk}. This becomes a diffeological groupoid using the path-holonomy bi-submersions constructed in \cite{AndrSk}. Recall that the restriction of $H(\cF)$ to a leaf is a Lie groupoid (transitive).
\item An $A$-path projects to a path which stays entirely in a leaf $L$ of the foliation $(M,\cF)$. Moreover, it suffices to consider only ``small'' paths as such. 
\item Restricting to a leaf $L$ we obtain a short exact sequence of Lie algebroids. The choice of a splitting for this sequence provides an isomorphism of the transitive Lie algebroid $A_L$ with the familiar (\cf \cite{MK2}) transitive Lie algebroid $A^{\cF}_L \oplus \h_L$, where $A^{\cF}_L$ is the Lie algebroid of the  restriction $H(\cF)_L^L$ of $H(\cF)$ to $L$. 
\end{itemize}
Our principal result is that, a bi-submersion of the foliation $(M,\cF)$ and a bi-submersion of the trivial foliation $\h$ can be combined to a new bi-submersion $(Z,s_Z,t_Z)$ of the foliation $(M,\cF)$. This new bi-submersion plays the role of coordinates for $W(A)$, even when $A$ is not integrable. More explicitly, $Z$ is a smooth manifold and $s_Z, t_Z : Z \to M$ are submersions such that: 
\begin{enumerate} 
\item The following $C^{\infty}(Z)$-module of vector fields on $Z$ is involutive (thus it is a singular foliation on $Z$):
\begin{equation}\label{eq:dfnbisubm1}
\widehat{\cF}_Z = \Gamma_c(Z;\ker ds_Z) + \Gamma_c(Z;\ker ds_Z)
\end{equation}
\item Each one of the submersions $s_Z, t_Z$ maps $(Z,\widehat{\cF}_Z)$ to $(M,\cF)$.
\item There is a map $\psi : Z \to W(A)$ which commutes with the source and target maps and carries the singular foliation $(Z,\widehat{\cF}_Z)$ to an appropriate singular foliation on $W(A)$.
\end{enumerate}
The map $\psi$ is crucial because it justifies viewing the bi-submersion $(Z,s_Z,t_Z)$ as local coordinates for $W(A)$. Of course, when $W(A)$ is not a Lie groupoid, there are no vector fields around to make sense of a foliation on $W(A)$. So one needs to make the last statement precise. 

\subsection*{Reformulation of the notion of bi-submersion}


This calls for a complete reformulation of the notion of bi-submersion, so that it can make sense in the case of spaces without a smooth structure. We do this in sections \ref{sec:bisubmersions} and \ref{sec:bisubmintegrable}, namely we show that the involutivity of the foliation $\widehat{\cF}_Z$ can be reformulated to a purely algebraic condition. The easiest way to describe this condition is to look at what happens with Lie groupoids. Recall that a Lie groupoid $\cG \gpd M$ is a bi-submersion for the foliation $(M,\cF)$ defined by its orbits. 

Now consider the $C^{\infty}(G)$-module of vector fields $\widehat{\cF}_{\cG}=\Gamma_c(\cG;\ker ds_{\cG}) + \Gamma_c(\cG;\ker ds_{\cG})$. Our result is that the involutivity of $\widehat{\cF}_{\cG}$ is equivalent to the existence of the map 
\begin{equation}\label{eq:dfnbisubm2}
\cG \times_s \cG \times_t \cG \to \cG \times_s \cG \times_t \cG, \quad (g,h,\zeta) \mapsto (\zeta,gh^{-1}\zeta,g)
\end{equation}
Notice that this map makes use of the groupoid multiplication. On the other hand, the bi-submersion $(Z,s_Z,t_Z)$ is a local object. So, in order to replace the involutivity of $\widehat{\cF}_Z$ by the localized version of the map \eqref{eq:dfnbisubm2}, we first observe that $(Z,s_Z,t_Z)$ carries a natural local Lie groupoid structure. 

With this formulation in hand, the statement ``$\psi : Z \to W(A)$ carries the singular foliation $(Z,\widehat{\cF}_Z)$ to an appropriate singular foliation on $W(A)$'' is equivalent to the fact that $\psi \times_{s} \psi \times_t \psi$ commutes with the localisation of the map \eqref{eq:dfnbisubm2}. 

In the overall, it is worth noting that, since bi-submersions really provide an equivalent definition of a foliation, our reformulation allows to extend the notion of a foliation beyond smooth manifolds. We are not concerned with the study of such cases here, but it would be interesting to explore the range of applicability of the $C^{\ast}$-functor, using this generalised notion of bi-submersion.

\subsection*{Construction of the map $\psi$}

The map $\psi : Z \to W(A)$ is constructed making use of the following result from \cite{DuistKolk}: Every connected and simply connected Lie group $H$ (finite dimensional) is the quotient of the Banach manifold $P(\h)$ of paths in its Lie algebra $\h$ by a certain kind of homotopy. Recall that the Weinstein groupoid $W(A)$ is constructed following the same principle. Namely, it is the quotient of the Banach manifold $P(A)$ of $A$-paths by $A$-homotopy. 

The local group structure of $Z$ allows us to view an element $z$ of $Z$ near the identity as a homotopy class of a pair of paths in appropriate Lie algebras: the first is a path on a fiber of $A^{\cF}_{L}$ and the second is a path on the fiber of $\h_L$. Then, the splitting of the transitive Lie algebroid $A_L$ to $A^{\cF}_{L}\oplus\h_L$ allows us to translate the previous pair of paths to an $A$-path of the Lie algebroid $A$. We then define $\psi(z) \in W(A)$ as the $A$-homotopy class of the latter $A$-path.
	\\ \\ 
	\textbf{Acknowledgements} I would like to thank Marco Zambon for several discussions and suggestions. Examples \ref{exs:bisubmersion}(b), \ref{ex:marco1} and \ref{ex:marco2} are by him.

\section{Foliations and Lie algebroids}\label{sec:preliminaries}

\subsection{Foliations}\label{sec:preliminaries1}

Let $M$ be a smooth manifold without boundary, with finite dimension. Throughout the sequel, we will use the following notation:
\begin{itemize}
\item[-] $\cX(M)$ ($\cX_c(M)$) is the $C^{\infty}(M;\R)$-module of smooth (compactly supported) vector fields on $M$.
\item[-] If $E \to M$ is a vector bundle, $\Gamma E$ ($\Gamma_c E$) is the $C^{\infty}(M;\R)$-module of smooth (compactly supported) sections of $E$.
\end{itemize}
Let us recall the following from \cite{AndrSk}.

\begin{enumerate}
\item Let $\cD$ be a $C^{\infty}(M;\R)$-submodule $\cD$ of $\cX_c(M)$, $U$ a smooth manifold and $f : U \to M$ a smooth map. We denote $f^{\ast}\cD$ the following $C^{\infty}(U;\R)$-submodule of $\Gamma_c(f^{\ast}TM)$: $$f^{\ast}\cD = \{\sum_{i=1}^{k}g_i\cdot(X_i \circ f) : k \in \N, g_i \in C^{\infty}_c(U;\R), X_i \in \cD\}$$
\item Put $f^{-1}(\cD)$ the following $C^{\infty}(U;\R)$-submodule of $\cX_c(U)$: $$f^{-1}(\cD) = \{X \in \cX_c(U) : df \circ X \in f^{\ast}\cD\}$$ When $U$ is an open subset of $M$ and $\iota_U : U \to M$ the inclusion map we have $\iota_U^{\ast}(\cD) = \iota_U^{-1}(\cD) = \{X \in \cD : supp(X)\subset U\}$. We denote the $C^{\infty}(U;\R)$-submodule $\iota_U^{\ast}(\cD)$ by $\cD|_{U}$.
\item We say that a finite family of vector fields $X_i \in \cD$, $i=1,\ldots,k$ generates the module $\cD$ over $U$ if $$\cD|_{U} = C^{\infty}_{c}(U;\R)X_1 |_U + \ldots C^{\infty}_{c}(U;\R)X_k |_U$$ We say that this family of vector fields generates $\cD$ at the point $p \in M$ if it generates $\cD$ on an open neighborhood of $p$.
\item A \textit{generalised smooth distribution} $(\cD,M)$ is a $C^{\infty}(M,\R)$-submodule $\cD$ of $\cX_c(M)$, which is finitely generated at every point $p \in M$. (Note that the minimal number of generating vector fields may differ from point to point.)
\item Let $f : U \to M$ be a submersion In \cite[Prop. 1.10]{AndrSk} it is shown that if $(M,\cD)$ is a generalised smooth distribution then $f^{-1}(\cD)$ is a generalised smooth distribution as well.
\item A generalized smooth distribution $(M,\cD)$ is called \textit{regular} if there is a locally trivial vector subbundle $E \subseteq TM$ such that $\cD = \Gamma_c(E)$. Otherwise it is called \textit{singular}.
\item A generalized smooth distribution $(M,\cD)$ such that $\cD$ is closed with respect to the Lie bracket of vector fields, is called a \textit{foliation}. We use the notation $(M,\cF)$ instead of $(M,\cD)$ in this case. 
\item The Stefan-Sussmann theorem shows that a foliation $(M,\cF)$ as such, induces a partition of $M$ to immersed submanifolds, called leaves. Throughout the sequel we will assume that every leaf $L$ is \textit{embdedded}. This assumption is only for the simplicity of the exposition. All of our results can be proven without difficulty for immersed leaves as well.
\end{enumerate}

We will also use the following lemma in the sequel, which was proven in \cite[Lemma 2.2]{androulidakis2013a} for foliations. Its proof is the same as the one in \cite{androulidakis2013a}.

\begin{lemma}\label{lem:submfol}
Let $f : U \to M$ be a surjective submersion with connected fibres. Let $(U,\cD_U)$ be a generalised smooth distribution which satisfies: 
\begin{itemize}
\item $\Gamma_c(U;\ker df) \subseteq \cD_U$ and  
\item $[\Gamma_c(U;\ker df),\cD_U] \subseteq \cD_U$. 
\end{itemize}
Then there is a unique generalised smooth distribution $(M,\cD_M)$ such that $$f^{-1}(\cD_M) = \cD_U.$$ 

Conversely, for any generalised smooth distribution $(M,\cD_M)$ the generalised smooth distribution $(U,f^{-1}(\cD_M))$ satisfies the two properties above.
\end{lemma}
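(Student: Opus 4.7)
The converse direction is a direct local check: for any $(M,\cD_M)$ and $\cD_U := f^{-1}(\cD_M)$, the inclusion $\Gamma_c(U;\ker df)\subseteq \cD_U$ is immediate since $df\circ Y=0$. For $X\in\cD_U$ with $df\circ X=\sum_i g_i\,(V_i\circ f)$ and $Y\in\Gamma_c(U;\ker df)$, a computation in fibered coordinates gives $df\circ[Y,X]=\sum_i Y(g_i)\,(V_i\circ f)\in f^{\ast}\cD_M$, so $[Y,X]\in\cD_U$.

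For the main direction I take $\cD_M$ to consist of those $V\in\cX_c(M)$ such that, near every $p\in M$, $V$ admits a local $f$-projectable lift in $\cD_U$; this is readily seen to be a $C^{\infty}(M)$-submodule of $\cX_c(M)$. Uniqueness is then formal: for any $V\in\cX_c(M)$, local sections of $f$ produce $f$-projectable lifts $\hat V$ satisfying $df\circ\hat V=V\circ f$, so the condition $\hat V\in f^{-1}(\cD_M^i)$ is equivalent to $V\in\cD_M^i$ locally, forcing any two candidates with equal $f^{-1}$-preimage to coincide.

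The substance of the proof lies in local finite generation of $\cD_M$ together with the identity $f^{-1}(\cD_M)=\cD_U$. My plan is: pick $p\in M$, $q\in f^{-1}(p)$, fibered coordinates $(x,y)$ centered at $q$, and local generators $X_1,\ldots,X_k$ of $\cD_U$. Using $\Gamma_c(\ker df)\subseteq\cD_U$ I subtract vertical parts and assume $X_i=\sum_j a_{ij}(x,y)\,\partial/\partial x^j$. Iterated brackets with compactly supported cutoffs of the coordinate vector fields $\partial/\partial y^l$ keep all vertical derivatives of the $X_i$ inside $\cD_U$ locally. Integrating this infinitesimal invariance along each fiber, and exploiting fiber connectedness, I will argue that the $f$-projectable vector fields $\tilde X_i(x,y):=\sum_j a_{ij}(x,0)\,\partial/\partial x^j$ lie in $\cD_U$ near $q$; their projections to $M$ then generate $\cD_M$ near $p$, and Hadamard's lemma in the fiber variable applied to each $X_i$ yields $\cD_U\subseteq f^{-1}(\cD_M)$. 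The principal obstacle is precisely this integration step, namely upgrading the infinitesimal $\ker df$-invariance to a genuine module statement producing actual $f$-projectable members of $\cD_U$ (as opposed to merely formal Taylor coefficients); fiber connectedness and the compactly supported setting are what make it tractable locally, following the pattern of \cite[Lemma 2.2]{androulidakis2013a}.
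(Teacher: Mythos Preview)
The paper itself does not give a proof of this lemma; it merely records that ``Its proof is the same as the one in \cite{androulidakis2013a}'' (Lemma~2.2 there, stated for foliations). Your sketch follows exactly that approach---reduce to a product chart, strip off vertical parts, use the bracket hypothesis to show the module is invariant under the flows of the fiber coordinate fields, and thereby produce $f$-projectable generators $\tilde X_i\in\cD_U$ whose pushforwards generate $\cD_M$---and you explicitly cite the same source for the ``integration step'' you flag as the crux. So your proposal and the paper's (deferred) proof coincide.

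One small correction worth making when you write it out: the inclusion $\cD_U\subseteq f^{-1}(\cD_M)$ in your last step does not really come from Hadamard's lemma. Once you know $\tilde X_i\in\cD_U$, write $\tilde X_i=\sum_m C_{im}X_m$ (possible since the $X_m$ generate the horizontal part of $\cD_U$); at $y=0$ the matrix $C$ is the identity, hence invertible nearby, so $X_i=\sum_m (C^{-1})_{im}\tilde X_m$ and therefore $df\circ X_i=\sum_m (C^{-1})_{im}(V_m\circ f)\in f^{\ast}\cD_M$. Hadamard alone would leave you with integrals of the $X_m$ evaluated at rescaled fiber points, which are not obviously in $\cD_U$; the invertibility argument is what closes the loop.
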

  
Notice that, when $(U,\cD_U)$ is a foliation, the second condition is implied by the first one, and in addition $(M,\cD_M)$ is a foliation.

\subsubsection{Fibers of a foliation}

Let $(M,\cF)$ be a foliation, fix a point $x$ in $M$ and put $L$ the leaf at $x$. Recall from \cite{AndrSk} that we can attach the following two vector spaces to $x$:
\begin{itemize}
\item The tangent space $T_x L$ at the point $x$ of the leaf $L$ at $x$;
\item The quotient $\cF_x = \frac{\cF}{I_{x}\cF}$, where $I_{x} = \{f \in C^{\infty}(M) : f(x)=0\}$. It has finite dimension because the module $\cF$ is locally finitely generated. We call $\cF_x$ the \textit{fiber} of $\cF$ at $x$.
\end{itemize}
Evaluation defines a surjective linear map $\cF_x \to T_x L$, whence we have a short exact sequence: $$0 \to \g_x \to \cF_x \to T_x L \to 0$$ The kernel $\g_x$ is a Lie algebra at every $x \in M$. The following hold:
\begin{enumerate}
\item The vector fields $X_1,\ldots,X_m \in \cX_c(M)$ are a minimal generating set of $\cF$ in a neighborhood of $x$ if and only if their classes $[X_1],\ldots,[X_m]$ are a basis of $\cF_x$. The proof of this result is exactly as in \cite[Prop. 1.5]{AndrSk}.
\item The dimension of $\cF_x$ is upper semi-continuous and it is not bound by the dimension of the manifold. We have $dim(\cF_x) = dim(\cF_y)$ for every $y \in L$.
\item The leaf $L$ is regular if and only if $dim(\cF_x) = dim(T_x L)$. Equivalently, when $\g_x$ vanishes.
\item There is an open dense subset $M_{reg} \subseteq M$ where $dim(\cF_x) = dim(T_x L)$ for every $x \in M_{reg}$, whence $M_{reg}$ is saturated by regular leaves. The restriction $\cF_{M_{reg}}$ is a projective module, namely it is the module of sections of a vector bundle (constant rank).
\item Put $A^{\cF}_L = \cup_{x \in L}\cF_x$. This is a transitive Lie algebroid over $L$ with anchor defined by the evaluation map (\cf \cite{androulidakis2013a}). Its $C^{\infty}(L)$-module of sections is $\frac{\cF}{I_L\cF}$, where $I_{L} = \{f \in C^{\infty}(M) : f|_{L}=0\}$. We obtain the short exact sequence 
\begin{equation}\label{eq:extAL}
0 \to \g_L \to A^{\cF}_L \to TL \to 0 
\end{equation}
where $\g_L$ is the bundle of Lie algebras whose fibers are isomorphic to $\g_x$.
\item The Lie algebroid $A^{\cF}_L$ is integrable. The restriction $H(\cF)_L^L$ of the holonomy groupoid of $\cF$ to $L$ is a Lie groupoid and differentiates to $A^{\cF}_L$ (\cf \cite{Debord2013}).
\end{enumerate}

\subsubsection{Bi-submersions}\label{sec:bisubmgeneral}

In \cite[Prop. 2.4]{androulidakis2013a} it was shown that the notion of a foliation $(M,\cF)$ is equivalent to the notion of \textit{bi-submersion} introduced in \cite{AndrSk}. Let us recall the latter:

\begin{definition}\label{dfn:bisubmfol}
A bi-submersion of the foliation $(M,\cF)$ is a triple $(U,\bs,\bt)$ where $\bs : U \to M$ and $\bt : U \to M$ are submersions which satisfy: 
\begin{equation}\label{eq:bisubm}
\bs^{-1}(\cF)={\bt}^{-1}(\cF) = \Gamma_c(U;\ker d\bs) + \Gamma_c(U;\ker d\bt)
\end{equation}
\end{definition}

The primary examples are \textit{path-holonomy} bi-submersions. These are the building blocks of the holonomy groupoid construction given in \cite{AndrSk}. Let us recall their construction.
\begin{ex}\label{ex:pathol}
Choose $X_1,\ldots,X_m \in \cF$ such that their classes $[X_1],\ldots,[X_m]$ are a basis of the fiber $\cF_x$. Consider the projection map $\bs : M \times \R^m \to M$. There is an open neighbourhood $U$ of $(x,0)$ in $M \times \R^m$ where the following map is defined: $$\bt(y,\lambda_1,\ldots,\lambda_m) = exp(\sum_{i=1}^m \lambda_i X_i)(y)$$ The triple $(U,\bt,\bs)$ is a bi-submersion. It is called the \textit{minimal path-holonomy bi-submersion} of $(M,\cF)$ at the point $x$. Of course we can apply this construction starting with local generators of $\cF$ whose number is larger than $dim(\cF_x)$. All the bi-submersions constructed in this way are called \textit{path-holonomy} bi-submersions.
\end{ex}

Let $(U,\bs,\bt)$ be a bi-submersion of the foliation $(M,\cF)$. The following are discussed in \cite{AndrSk}:
\begin{enumerate}
\setcounter{enumi}{8}
\item A \textit{bisection} of $(U,\bs,\bt)$ is a closed submanifold $\bb$ of $U$ such that the restrictions $\bs|_{\bb}$ and $\bt|_{\bb}$ are local diffeomorpisms. We say that $\bb$ is a bisection at the point $u \in U$ if $u \in \bb$.
\item If $\bb$ is a bisection, the map $\varpi_{\bb} = \bt|_{\bb} \circ (\bs|_{\bb})^{-1} : \bs(\bb) \to \bt(\bb)$ is a local diffeomorphism of $M$ such that $(\varpi_{\bb})_{\ast}(\cF|_{\bs(\bb)})=\cF|_{\bt(\bb)}$. If the bisection $\bb$ is at $u \in U$, we say that $u$ carries the local diffeomorphism $\varpi_{\bb}$. On the other hand, let $\varpi$ be a local diffeomorphism of $M$ which preserves the module $\cF$ (appropriately restricted to the domain and image of $\varpi$). We say that the point $u \in U$ carries $\varpi$ iff there exists a bisection $\bb$ at $u$ such that $\varpi_{\bb}=\varpi$.
\item Let $(V,\bs_V,\bt_V)$ be another bi-submersion of $(M,\cF)$. If the points $u \in U$ and $v \in V$ carry the same local diffeomprhism $\varpi$ as in the previous item, then there exist open neighbourhoods $\tilde{U}_u$ of $u$ and $\tilde{V}_v$ of $v$ and a smooth map $\varpi_{u,v} : \tilde{U}_u \to \tilde{V}_v$ such that $\bs_V \circ \varpi_{u,v} = \bs$ and $\bt_V \circ \varpi_{u,v}=\bt$.  
\item The \textit{inverse} bisubmersion of $(U,\bs,\bt)$ is $(\overline{U},\overline{\bs},\overline{\bt})$ where $\overline{U}=U$, $\overline{\bs}=\bt$ and $\overline{\bt}=\bs$. If $u \in U$ carries the local diffeomorphism $\varpi$ with respect to the bi-submersion $(U,\bs,\bt)$ then $u$ carries $\varpi^{-1}$ with respect to the converse bi-submersion. 
\end{enumerate}

\subsubsection{Holonomy groupoid}\label{sec:holgpd}

In \cite{AndrSk} it was shown that items (i), (j), (k), (l) above define an equivalence relation. The associated quotient is the holonomy groupoid $H(\cF) \gpd M$. Hence bi-submersions of $(M,\cF)$ can be viewed as the coordinates of the holonomy groupoid. Let us recall the following from \cite{AS4} about this groupoid.
\begin{enumerate}
\item If $V$ is an open subset of $M$ then the holonomy groupoid of the restriction $\cF_V$ is the $s$-connected component of the restriction $H(\cF)_V^V = \{z \in H(\cF) : t(z) \in V \text{ and } s(z) \in V\}$.
\item If the open subset $V$ is saturated, then $H(\cF_V) = H(\cF)_V^V$.
\item Let $f : V \to M$ be a smooth map transverse to $\cF$ (\cf \cite[\S 1.2.3]{AndrSk}). Then $H(f^{-1}(\cF))$ is the $s$-connected component of $$H(\cF)^f_f = \{(k_1,z,k_2) \in K \times H(\cF) \times K : t(z)=f(k_1) \text{ and } s(z)=f(k_2)\}$$
\item If moreover $f : V \to M$ is a submersion with connected fibers and its image is saturated, then $H(f^{-1}(\cF)) = H(\cF)_f^f$.
\item Let $K$ be a saturated locally closed subset of $M$. Then $K$ is open in its closure $\overline{K}$ and the closed subset $Y = \overline{K}\setminus K$ is saturated. Consider the open $V = M \setminus (\overline{K}\setminus K)$.  It follows that $H(\cF_V)$ is the $s$-connected component of $H(\cF)_V^V$.
\end{enumerate}

\subsection{Lie algebroids}\label{sec:preliminaries2}

Let $A \to M$ be a Lie algebroid (not necessarily integrable). Put $\tilde{\rho} : A \to TM$ its anchor map and  $(M,\cF)$ the underlying foliation, namely $\cF$ is the module of vector fields generated by $\tilde{\rho} \circ \sigma$ for every compactly supported section $\sigma \in \Gamma_c A$. 

\subsubsection{Exact sequences}\label{sec:extensions}

Now fix a point $x \in M$ and let $V$ be an open neighbourhood of $x$ in $M$. The anchor map $\tilde{\rho}$ induces a morphism of $C^{\infty}(V)$-modules $\rho_V : \Gamma_c(V;A) \to \cF_V$ which is onto. Whence we obtain a short exact sequence of $C^{\infty}(V)$-modules 
\begin{equation}\label{eq:extV}
0 \to \h_V \to \Gamma_c(A_V) \stackrel{\rho_V}{\longrightarrow} \cF_V \to 0
\end{equation}
The kernel $\h_V$ comprises sections $\sigma \in \Gamma_c(V;A)$ such that $\rho \circ \sigma =0$. We have: 
\begin{enumerate}
\item The Lie bracket of $\Gamma_c(V;A)$ restricts to $\h_V$; in fact, $\h_V$ is a Lie-Rinehart algebra with the zero anchor map.
\item The fiber $\h_x = \frac{\h_V}{I_x \h_V}$ is a Lie algebra: Indeed, since the anchor map vanishes on sections in $\h_V$, the Leibniz rule for the Lie bracket $[\cdot,\cdot]_A$ of $A$ implies that the formula $[\langle \xi \rangle,\langle \eta \rangle] = \langle [\xi,\eta]_A \rangle$ is well defined for all $\xi,\eta \in \h_V$ (our notation $\langle \xi \rangle$ stands for the class in $\h_x$ of $\xi \in \h_V$). For every other point $y \in L$, the Lie algebra $\h_y$ is isomorphic to $\h_x$.
\item The anchor map satisfies $\rho_V(I_x A_V) \subseteq I_x \cF_V$. Recall that the fiber $A_x$ is the quotient $\frac{\Gamma_c(V;A)}{I_x \Gamma_c(V;A)}$. Whence, we have an exact sequence of vector spaces 
\begin{equation}\label{eq:extx}
0 \to \h_x \to A_x \to \cF_x \to 0
\end{equation}
This implies that the dimension of $\h_x$ is bounded above by the rank of the vector bundle $A$. In fact, the dimension of $\h_x$ is lower semi-continuous. 
\item The sections $\nu_1,\ldots,\nu_k$ in $\h_V$ are a minimal generating set of $\h_V$ if and only if $[\nu_1],\ldots,[\nu_k]$ is a basis of $\h_x$. The proof of this result is exactly as in \cite[Prop. 1.5]{AndrSk}. It follows that the module $\h_V$ is finitely generated.
\item If $\nu_1,\ldots,\nu_k$ is a minimal generating set of $\h_V$, then $\rho_V(\nu_i)=0$ for all $i = 1,\ldots,k$. In this sense, we say that the module $\h_V$ induces the foliation by points on $V$.
\item We also have $\rho(I_{L\cal V} A_V) \subseteq I_{L\cap V} \cF_V$, so we get an extension of transitive Lie algebroids 
\begin{equation}\label{eq:extL}
0 \to \h_{L\cap V} \to A_{L\cap V} \stackrel{\rho}{\longrightarrow} A^{\cF}_{L\cap V} \to 0
\end{equation} 
The Lie algebra bundle $\h_{L \cap V}$ is a totally intransitive Lie algebroid, namely it is equipped with the zero anchor map. Again, it induces the foliation by points on $L \cap V$.
\item The Lie algebra bundle $\h_{L\cap V}$ sits inside the isotropy Lie algebra bundle $\g^A_{L\cap V} = \ker(\tilde{\rho}_{L\cap V})$ of the transitive Lie algebroid $A_{L\cap V}$. The quotient is the kernel $\g_{L\cap V}$ of extension \eqref{eq:extAL}.
\item The semicontinuity of the dimension of $\h_x$ implies that the set $M_A$ of points $x$ in $M$ such that $\h_x$ \textit{does \underline{not}} vanish, is saturated and open. Its complement $M_A^c = M \setminus M_A$ is saturated and closed. Consider the inclusion $\iota : M_A^c \to M$ and put $\cF_{M_A^c}=\iota^{-1}(\cF)$. Then $(M_A^c,\cF_{M_A^c})$ is a singular foliation. Its leaves are the leaves of $\cF$ which lie in $M_A^c$.
\item For every leaf $L$ in $M_A^c$ we have $A_L = A^{\cF}_L$. Therefore, the restriction of $A$ to a every leaf $L$ in $M_A^c$ is an integrable transitive Lie algebroid. The restriction $A_{M_A^c}$ is the union of $A^{\cF}_L$ for all leaves $L$ in $M_A^c$. 
\item 
The dimension of leaves in $M_A^c$ may not be constant, but since $\h_x = 0$ for every $x \in M_A^c$ we have $A_x = A^{\cF}_x = \cF_x$ for every $x \in M_A^c$. This shows that the fibers of the foliation $(M_A^c,\cF_{M_A^c})$ have constant rank. Whence this foliation is almost regular.  
\item Since the holonomy groupoid of an almost regular foliation is always a Lie groupoid (\cf \cite{Debord2013}), it follows  that $H(\cF_{M_A^c})$ is a Lie groupoid. The Lie algebroid of this groupoid is $A^{\cF}_{M_A^c}$, therefore $A_{M_A^c} = A^{\cF}_{M_A^c}$ is an integrable Lie algebroid. It follows that the restriction $W(A)_{M_A^c}^{M_A^c}$ of the Weinstein groupoid is a Lie groupoid. It is the monodromy groupoid of the foliation $(M_A^c,\cF_{M_A^c})$.
\item The relation between the sets $\partial M_{reg}$ and $M_A^c$ is controlled by the short exact sequence of Lie algebras $$0 \to \h_x \to \g^A_x \to \g_x \to 0$$
\end{enumerate}

\begin{exs}
\begin{enumerate}
\item Consider the transformation Lie algebroid of the action of $SL(2, \R)$ on $\R^2$. Then $\h_x$ is 1-dimensional at every $x \in \R^{2} \setminus \{0\}$ and $\h_0 = 0$.
\item Recall \cite[Ex. 3.8]{AZ3} the Lie algebroid defined by the Lie-Poisson structure on $\mathfrak{su}(2)^{\ast} \cong \R^3$. The (symplectic) foliation of this algebroid is not almost regular. However, $\h_x = \R$ at every $x \in \R^{3} \setminus \{0\}$ and $\h_0 = 0$.
\end{enumerate}
\end{exs}

\subsubsection{Splittings}

Now choose a splitting $\sigma : A^{\cF}_{L\cap V} \to A_{L\cap V}$ of the exact sequence of vector bundles \eqref{eq:extL}. We have:
\begin{itemize}
\item The splitting $\sigma$ determines uniquely a linear map $j : A_{L \cap V} \to \h_{L\cap V}$ such that $j\circ q + \tilde{\rho}\circ \sigma = \id$. The map $\sigma\oplus j : A_{L\cap V} \to A^{\cF}_{L\cap V} \oplus \h_{L \cap V}$ is an isomorphism of vector bundles.
\item Since the leaf $L$ is embedded, the bundles $A^{\cF}_{L\cap V}$ and $\h_{L \cap V}$ are trivial, namely $A^{\cF}_{L\cap V} = (L \cap V) \times \cF_x$ and $\h_{L \cap V} = (L \cap V) \times \h_x$. Whence $A_{L\cap V}$ is also the trivial vector bundle $(L \cap V) \times (\cF_x \times \h_x)$.
\item The map $\sigma\oplus j$ becomes an isomorphism of Lie algebroids when the sections of $A^{\cF}_{L\cap V} \oplus \h_{L \cap V}$ are endowed with the following Lie bracket
\begin{eqnarray}\label{eqn:bracket}
[X\oplus V,Y\oplus W] = [X,Y] \oplus \{X(W)-Y(V) + [V,W] - R_{\sigma}(X,Y)\}
\end{eqnarray}
for every $X, Y \in \Gamma(L \cap V;A^{\cF}_{L \cap V})$ and $V, W \in \Gamma(L \cap V;\h_{L \cap V})$. Here $R_{\sigma} : A^{\cF}_{L \cap V} \times A^{\cF}_{L \cap V} \to \h_{L \cap V}$ is the curvature of $\sigma$.
\end{itemize}


\subsubsection{Small $A$-paths  and splittings}\label{sec:Apathspl}


Recall from \cite{CrFeLie} that the Weinstein groupoid $W(A) \gpd M$ is the quotient of the Banach manifold $P(A)$ of $A$-paths by the appropriate notion of homotopy, called $A$-homotopy. We will only use ``small'' $A$-paths here, this means $A$-paths which live in $A_V$, where $V$ is an open neighbourhood of $x$ in $M$. Let us recall their definition. 

An $A_V$-path is a smooth map $\alpha : [0,1] \to A_V$ such that for every $s \in [0,1]$ we have 
\begin{eqnarray}\label{eqn:Apath}
\rho \circ \alpha(s) = \frac{d}{dt}|_{t=s}(\pi \circ \alpha)(t)
\end{eqnarray} 
We will denote the base path $\pi \circ \alpha$ by $\gamma : [0,1] \to V$. We assume $\gamma(0)=x$. Equation \eqref{eqn:Apath} implies that the path $\gamma$ stays in the leaf $L$, namely  $\gamma(t) \in L \cap V$ for  all $t \in [0,1]$. Whence, it is also an $A_{L\cap V}$-path. We denote $P_0(A)$ the Banach manifold of $A$-paths such that $\alpha(0)$ is an element in the zero section.

In view of the exact sequence \eqref{eq:extL}, equation \eqref{eqn:Apath} shows that small $A$-paths can be characterised in the following way:
\begin{prop}\label{prop:Apath1}
A smooth path $\alpha : [0,1] \to A_V$ is an $A_V$-path whose base path lives in the leaf $L$ if and only if $\rho\circ \alpha$ is an $A^{\cF}_{L\cap V}$-path.
\end{prop}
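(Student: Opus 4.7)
The plan is to unpack both definitions and reduce everything to the fact that the anchor of $A_{L\cap V}$ factors through $A^{\cF}_{L\cap V}$. More precisely, letting $\rho_{\cF}$ denote the anchor of the transitive Lie algebroid $A^{\cF}_{L\cap V}$ and writing $\rho : A_{L\cap V} \to A^{\cF}_{L\cap V}$ for the quotient map from the extension \eqref{eq:extL}, exactness of \eqref{eq:extL} together with the fact that $\h_{L\cap V}$ has zero anchor yields the identity $\rho_{\cF}\circ \rho = \tilde{\rho}|_{A_{L\cap V}}$. This is the only algebraic input needed.

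First I would handle the forward direction. Given an $A_V$-path $\alpha$ with $\gamma=\pi\circ\alpha$ and $\gamma(0)=x\in L$, equation \eqref{eqn:Apath} shows $\dot\gamma(s)=\tilde\rho(\alpha(s))\in T_{\gamma(s)}L$ for every $s$, so $\gamma$ stays in the leaf $L$ and $\alpha$ takes values in $A_{L\cap V}$. Set $\beta=\rho\circ\alpha:[0,1]\to A^{\cF}_{L\cap V}$. Since $\rho$ is a vector bundle map over the identity of $L\cap V$, the base path of $\beta$ is again $\gamma$. Applying the factorization of the anchor gives
\[
\rho_{\cF}(\beta(s)) \;=\; \rho_{\cF}(\rho(\alpha(s))) \;=\; \tilde\rho(\alpha(s)) \;=\; \tfrac{d}{dt}\bigr|_{t=s}\gamma(t),
\]
which is exactly the $A^{\cF}_{L\cap V}$-path condition.

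For the converse, assume $\beta:=\rho\circ\alpha$ is an $A^{\cF}_{L\cap V}$-path. Its very definition forces $\beta(s)\in A^{\cF}_{L\cap V}$ for all $s$, hence $\gamma(s)=\pi^{\cF}(\beta(s))\in L\cap V$ and $\alpha(s)\in A_{L\cap V}$. The $A^{\cF}$-path condition then reads $\rho_{\cF}(\beta(s))=\dot\gamma(s)$, which by the same factorization equals $\tilde\rho(\alpha(s))$. Hence $\alpha$ satisfies \eqref{eqn:Apath} and is an $A_V$-path with base path in $L$.

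There is no real obstacle here: both directions are essentially tautological once one records that the anchor of $A_{L\cap V}$ factors through the quotient by $\h_{L\cap V}$. The only bookkeeping is making sure that on each side of the equivalence the base path genuinely takes values in $L\cap V$, which in the $\Rightarrow$ direction comes from the fact that $\tilde\rho$ lands in $TL$ above points of $L$, and in the $\Leftarrow$ direction is automatic from the domain of $\rho_{\cF}$.
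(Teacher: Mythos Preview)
Your argument is correct and is precisely the ``straightforward application of the definitions'' that the paper alludes to; the paper itself omits the proof entirely, so there is nothing to compare beyond noting that your write-up makes explicit the one algebraic fact driving both directions, namely the factorization $\rho_{\cF}\circ\rho=\tilde\rho|_{A_{L\cap V}}$ coming from the exact sequence \eqref{eq:extL}. The only cosmetic point is that in the converse direction the composition $\rho\circ\alpha$ is not even defined unless $\alpha$ already lands in $A_{L\cap V}$, so the hypothesis tacitly includes that; you handle this correctly, but you might phrase it as an implicit part of the assumption rather than as something ``forced'' a posteriori.
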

The proof is a straightforward application of the definitions, so we omit it. The following observations are important:

\begin{itemize}
\item Since $L$ is an embedded leaf, $L \cap V$ is an open subset of $L$ so the bundle $A^{\cF}_{L \cap V}$ is the trivial bundle $(L \cap V) \times \cF_{x}$. Therefore the path $\rho \circ \alpha$ has the form $\gamma \times \rho(\alpha) : [0,1] \to (L \cap V) \times \cF_{x}$. If $\alpha(0)$ is a point in the zero section of $A_{L\cap V}$ then $\rho(\alpha) \in P_0(\cF_x)$. (Note that here we view $\cF_x$ as an abelian Lie algebra. This is justified in Ex. \ref{ex:add}.)

\item On the other hand, let us look at the path $j \circ \alpha : [0,1] \to \h_{L\cap V}$. The bundle $\h_{L\cap V}$ is also trivial, \ie isomorphic to $(L \cap V)\times\h_x$. So the path $j \circ \alpha$ can be written as $\gamma \times j(\alpha) : [0,1] \to (L \cap V)\times\h_x$, where $\gamma$ is the base path of $\alpha$ and $j(\alpha)$ is a path in the Lie algebra $\h_x$. Again, if $\alpha(0)$ is a point in the zero section of $A_{L\cap V}$ then $j(\alpha) \in P_0(\h_x)$. 

\item Note that the path $j(\alpha) \in P_0(\h_x)$ is also a path in $\h_{L\cap V}$. If we consider $\h_{L\cap V}$ as a totally intransitive Lie algebroid, then $j(\alpha)$ is an $\h_{L\cap V}$-path over the constant path at zero. This is natural, since totally intransitive means that $\h_{L\cap V}$ is endowed with the zero anchor map, which induces the foliation by points of $L \cap V$.

\item It follows that the isomorphism $\sigma\oplus j$ trivialises the vector bundle $A_{L \cap V}$ as $(L \cap V) \times (\cF_x \times \h_x)$. It follows that every $A_{L\cap V}$-path $\alpha$ corresponds to a path $\gamma \times \rho(\alpha) \times j(\alpha)$, where $\gamma$ is a path in $L\cap V$, $\rho(\alpha) \in P_0(\cF_x)$ and $j(\alpha) \in P_0(\h_x)$ In other words, $\rho(\alpha) \times j(\alpha)$ is an element of $P_0(\cF_x \times \h_x)$, where $\cF_x \times \h_x$ is the cartesian product Lie algebra arising from the abelian Lie algebra $\cF_x$ and the Lie algebra $\h_x$. 
\end{itemize}


In \S \ref{sec:pathhol} we'll analyse the $A$-paths of the Lie algebroid $A^{\cF}_{L \cap V}$. To this end, fix a choice of vector fields $\xi_1,\ldots,\xi_n$ in $\cF_V$ such that $\langle\xi_1\rangle,\ldots,\langle\xi_n\rangle$ is an orthonormal basis of the vector space $\cF_x$. Orthonormality here makes sense up to the choice of a smooth fiberwise inner product on the original Lie algebroid $A$. Indeed, it was shown in \cite{AK1} that such a choice gives rise to a family of inner products $(\cdot,\cdot)_y$ on $\cF_y$ for every $y \in V$, which is smooth in a well defined way.

\begin{prop}\label{prop:Apath2}
Let $(y,\alpha_x)$ be an element of $(L \cap V) \times P_0(\cF_x)$. Then there is a unique $A^{\cF}_{L \cap V}$-path $\alpha^{\cF} = (\gamma_y,\alpha_x)$ such that $y=\gamma_y(0)$. The map $(y,\alpha_x) \mapsto (\gamma_y,\alpha_x)$ is a bijection.
\end{prop}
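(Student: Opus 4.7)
The plan is to use the trivialization $A^{\cF}_{L\cap V} \cong (L\cap V)\times \cF_x$ induced by the choice of $\xi_1,\dots,\xi_n$ to reduce the $A$-path equation to a time-dependent ODE on $L\cap V$, and then invoke the existence and uniqueness theorem for such ODEs. Specifically, under the trivialization any smooth map $\alpha^{\cF} : [0,1]\to A^{\cF}_{L\cap V}$ is a pair $(\gamma,\beta)$ with $\gamma : [0,1]\to L\cap V$ and $\beta : [0,1]\to \cF_x$, and by definition the anchor in the trivialization sends $(\gamma(s),\sum_{i} b_i(s)\langle\xi_i\rangle)$ to $\sum_i b_i(s)\,\xi_i(\gamma(s))\in T_{\gamma(s)}L$. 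So the $A$-path equation \eqref{eqn:Apath} becomes the single ODE
\begin{equation*}
\gamma'(s) \;=\; \sum_{i=1}^{n} b_i(s)\,\xi_i(\gamma(s)),
\end{equation*}
where $\beta(s)=\sum_i b_i(s)\langle\xi_i\rangle$ with $b_i\in C^\infty([0,1];\R)$.

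Given $(y,\alpha_x)\in (L\cap V)\times P_0(\cF_x)$, I would first write $\alpha_x(s)=\sum_{i=1}^{n} a_i(s)\langle\xi_i\rangle$ in the fixed basis, so the coefficients $a_i$ are smooth and determined by $\alpha_x$. The time-dependent vector field $X_s=\sum_i a_i(s)\xi_i$ on $V$ is smooth in $(s,y)$, so standard ODE theory produces a unique maximal integral curve $\gamma_y$ with $\gamma_y(0)=y$. Shrinking $V$ (or restricting to pairs $(y,\alpha_x)$ for which the solution stays in $L\cap V$ for all $s\in[0,1]$, which is automatic since $\gamma_y$ stays on the leaf by $\xi_i\in\cF$) gives $\gamma_y:[0,1]\to L\cap V$. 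Setting $\alpha^{\cF}(s):=(\gamma_y(s),\alpha_x(s))$ in the trivialization yields an $A^{\cF}_{L\cap V}$-path by construction.

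For injectivity and surjectivity of the map $(y,\alpha_x)\mapsto(\gamma_y,\alpha_x)$, I would argue as follows. Conversely, any $A^{\cF}_{L\cap V}$-path $\alpha^{\cF}$ reads, in the trivialization, as a pair $(\gamma,\beta)$; its base point $\gamma(0)\in L\cap V$ and its ``vertical'' component $\beta\in P_0(\cF_x)$ furnish the inverse assignment $\alpha^{\cF}\mapsto(\gamma(0),\beta)$. The $A$-path equation identifies $\gamma$ as the unique integral curve of the time-dependent field $\sum_i b_i(s)\xi_i$ starting at $\gamma(0)$, so the two assignments are mutually inverse. Uniqueness of the $A^{\cF}$-path with prescribed data $(y,\alpha_x)$ is thus exactly uniqueness of the ODE solution, while surjectivity reflects the fact that every $A^{\cF}$-path is recovered from its starting point and vertical component.

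The only delicate point is the global existence of $\gamma_y$ on the whole interval $[0,1]$; the main obstacle is therefore controlling the domain of the ODE solution. This is handled by the implicit assumption that we work with ``small'' $A$-paths (\S \ref{sec:Apathspl}): one either shrinks $V$ around $x$ so that the integral curves of bounded combinations $\sum a_i(s)\xi_i$ stay in $L\cap V$ for $s\in[0,1]$, or restricts the parameter space to those $(y,\alpha_x)$ for which the solution remains in $L\cap V$. Once this is granted, the bijection is a direct consequence of the trivialization of $A^{\cF}_{L\cap V}$ combined with the Picard--Lindel\"of theorem applied fibrewise to the smooth family of time-dependent vector fields $X_s=\sum_i a_i(s)\xi_i$.
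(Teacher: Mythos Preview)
Your proof is correct and follows essentially the same route as the paper's: both expand $\alpha_x$ in the fixed basis $\langle\xi_1\rangle,\dots,\langle\xi_n\rangle$ of $\cF_x$, form the associated time-dependent vector field $X_s=\sum_i a_i(s)\xi_i$ on $V$, and define $\gamma_y$ as its integral curve through $y$. The paper invokes orthonormality to write the coefficients as $a_i(t)=(\alpha_x(t),\langle\xi_i\rangle)_x$ and is terser about bijectivity and the domain of the flow, whereas you spell out the ODE argument, the inverse map, and the ``small path'' caveat more explicitly; but the underlying idea is identical.
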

\begin{proof}
For every $t \in [0,1]$ there are unique $\lambda_1(t),\ldots\lambda_n(t) \in \R$ such that $\alpha_x(t) = \sum_{i=1}^n \lambda_i(t)\langle \xi _i\rangle$. Orthonormality implies that $\lambda_i(t) = (\alpha_x(t),\langle \xi_i \rangle)_{x}$ is a smooth function. Consider the time-dependent vector field $X^t = \sum_{i=1}^n \lambda_i(t)\rho(\xi_i)$ and put $\gamma_y(t) = exp(X^t)(y)$.
\end{proof}

\begin{remark}\label{rmk:Apath}
Recall the homotopy relation defined in \cite[Prop. 1.13.4]{DuistKolk}. It is easy to see that it corresponds to the notion of $A$-homotopy defined in \cite{CrFeLie}. The map defined in proposition \ref{prop:Apath2} sends paths in $P_0(\cF_x)$ which are homotopic in this sense, to $A^{\cF}_{L\cap V}$-homotopic paths.

Moreover, consider paths $\zeta_i = \gamma_i \times \rho(\alpha_i) \times j(\alpha_i)$, $i=1,2$ such that $\gamma_1$ and $\gamma_2$ are homotopic rel end points and the paths $\rho(\alpha_1) \times j(\alpha_1)$, $\rho(\alpha_2) \times j(\alpha_2)$ in $P_0(\cF_x \times \h_x)$ are homotopic in the sense of  \cite[Prop. 1.13.4]{DuistKolk}. Then the $A_{L \cap V}$-paths $\sigma \oplus j(\zeta_1)$ and $\sigma \oplus j(\zeta_2)$ are  $A_{L \cap V}$-homotopic.
\end{remark}

\section{Bi-submersions}\label{sec:bisubmersions}

Here we explain better the notion of bi-submersion and reformulate it. Note that the bi-submersions we discuss here are slightly more general than Definition \ref{dfn:bisubmfol}. Namely, we allow the submersions $\bs, \bt$ to take values in two different manifolds $M$ and $N$ respectively.

  

\subsection{Definition of bi-submersion}
  
 \begin{prop}\label{prop:char}
 Given two submersions $\bs \colon U \to M$ and $\bt \colon U \to N$ with connected fibers, the following are equivalent:
 \begin{enumerate}
  \item The $C^{\infty}(U;\R)$-module $\cF_U = \Gamma_c(U;\ker d\bs) + \Gamma_c(U;\ker d\bt)$ is a foliation;
  \item There are unique foliations $(M,\cF_M)$ and $(N,\cF_N)$ such that $$s^{-1}(\cF_M)={\bt}^{-1}(\cF_N) = \Gamma_c(U;\ker d\bs) + \Gamma_c(U;\ker d\bt)$$
 \end{enumerate}
\end{prop}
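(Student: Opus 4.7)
The plan is to reduce the proposition directly to Lemma \ref{lem:submfol}, treating the two submersions $\bs$ and $\bt$ symmetrically.

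\emph{Direction (b)$\Rightarrow$(a).} This is immediate from the converse part of Lemma \ref{lem:submfol}: if $(M,\cF_M)$ is a foliation, then the generalised smooth distribution $\bs^{-1}(\cF_M)$ satisfies $\Gamma_c(U;\ker d\bs)\subseteq \bs^{-1}(\cF_M)$ and the stated bracket condition, and by the observation after Lemma \ref{lem:submfol} is again a foliation. Since by assumption $\bs^{-1}(\cF_M)$ coincides with $\cF_U$, property (a) follows.

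\emph{Direction (a)$\Rightarrow$(b).} I would apply Lemma \ref{lem:submfol} once to $\bs$ and once to $\bt$. The two hypotheses of the lemma for the pair $(\bs,\cF_U)$ read as follows. First, $\Gamma_c(U;\ker d\bs)\subseteq \cF_U$, which is tautologically true from the definition \eqref{eq:dfnbisubm1} of $\cF_U$. Second, $[\Gamma_c(U;\ker d\bs),\cF_U]\subseteq \cF_U$; this is the only non-trivial ingredient and it follows immediately from the assumption that $\cF_U$ is involutive, because $\Gamma_c(U;\ker d\bs)\subseteq \cF_U$. For the surjectivity hypothesis of Lemma \ref{lem:submfol}, observe that $\bs$ is an open map, so $\bs(U)$ is open in $M$; replacing $M$ with $\bs(U)$ one obtains a surjective submersion with connected fibres and then extends the resulting foliation back to $M$ by restricting to the open set $\bs(U)$. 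Exactly the same argument applies to $\bt$.

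Lemma \ref{lem:submfol} then produces a unique generalised smooth distribution $\cF_M$ on $M$ with $\bs^{-1}(\cF_M)=\cF_U$, and symmetrically a unique $\cF_N$ on $N$ with $\bt^{-1}(\cF_N)=\cF_U$. Uniqueness gives (b) at the level of distributions. The concluding remark after Lemma \ref{lem:submfol} states that when $\cF_U$ is a foliation, $\cF_M$ (resp.\ $\cF_N$) is automatically a foliation as well. Combining these identifications gives $\bs^{-1}(\cF_M)=\bt^{-1}(\cF_N)=\Gamma_c(U;\ker d\bs)+\Gamma_c(U;\ker d\bt)$, which is exactly (b).

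The only real point that needs care is the verification of the bracket condition $[\Gamma_c(U;\ker d\bs),\cF_U]\subseteq \cF_U$ from involutivity of $\cF_U$; this is immediate since $\Gamma_c(U;\ker d\bs)$ sits inside $\cF_U$. The connectedness of the fibres is needed only to apply the uniqueness statement of Lemma \ref{lem:submfol}. No further constructions beyond two invocations of that lemma are required.
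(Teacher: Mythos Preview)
Your direction $(a)\Rightarrow(b)$ is exactly the paper's argument: two applications of Lemma~\ref{lem:submfol}, one for $\bs$ and one for $\bt$, together with the remark following the lemma that pushes the foliation down.

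In the direction $(b)\Rightarrow(a)$ there is a gap in your justification. You invoke ``the observation after Lemma~\ref{lem:submfol}'' to conclude that $\bs^{-1}(\cF_M)$ is a foliation, but that observation runs the other way: it says that \emph{if} the upstairs module $\cD_U$ is already a foliation, \emph{then} the downstairs $\cD_M$ is one. It does not assert that the pullback of a foliation by a submersion is again involutive, which is precisely what you need. The converse part of the lemma only gives you the two containment/bracket conditions for $\bs^{-1}(\cF_M)$; it says nothing about involutivity of $\bs^{-1}(\cF_M)$ itself. So as written the argument is circular at this point.

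The missing fact---that $\bs^{-1}(\cF_M)$ is involutive when $\cF_M$ is---is of course true, but it requires a short argument (e.g.\ locally generate $\bs^{-1}(\cF_M)$ by $\Gamma_c(\ker d\bs)$ together with $\bs$-projectable lifts of generators of $\cF_M$, and check brackets). The paper supplies essentially this argument in a slightly different guise: it chooses, on small open sets, a frame for $\ker d\bs$ consisting of $\bt$-projectable vector fields (citing \cite[Prop.~2.11b)]{androulidakis2013a}), so that $[\Gamma_c(\ker d\bt),\Gamma_c(\ker d\bs)]\subseteq\Gamma_c(\ker d\bt)\subseteq\cF_U$, and then notes that local finite generation is automatic since both kernels have constant rank. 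If you fill your gap with an explicit involutivity argument for the pullback, your proof becomes a legitimate alternative; but you cannot get $(b)\Rightarrow(a)$ purely from Lemma~\ref{lem:submfol} and its trailing remark as they are stated.
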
 
\begin{note}Notice that the characterization b) is the one used in definition \ref{dfn:bisubmfol}. In this sequel we will focus on characterization a).\end{note}
\begin{proof}
``$a)\Rightarrow b)$:'' This implication follows applying Lemma \ref{lem:submfol} 
to the foliation $\cF_U$ and the maps $\bs,\bt$.
``$b)\Rightarrow a)$:'' On every small enough open subset of $U$ there exists a frame for  $ker d\bs$ consisting of vector fields $Y_i$ which are $\bt$-projectable (to elements of $\cF_N$), by the proof of \cite[Prop. 2.11b)]{androulidakis2013a}. In particular  $[\Gamma(U;\ker d\bt),Y_i]\subset \Gamma(U;\ker d\bt)$, so  $[\Gamma_c(U;\ker d\bt),\Gamma_c(U;\ker d\bs)]\subset \cF_U$ and therefore $\cF_U$ is involutive. $\cF_U$ is   locally finitely generated (as $\ker d\bt,\ker d\bs$ are constant rank distributions), hence it is a foliation.

\end{proof}

Corollary \ref{cor:desingularize}\footnote{This corollary is not used in the rest of this note.} below shows that if a generalised smooth distribution is ``desingularized'' by a pair of submersions, then it is necessarily involutive. 
\begin{cor}\label{cor:desingularize}
 Let $\bs \colon U \to M$ and $\bt \colon U \to N$ be  two surjective submersions   {with connected fibers}, denote ${\cF_U} := \Gamma_c(U;\ker d\bs) + \Gamma_c(U;\ker d\bt)$. If there is a generalised smooth distribution $\cF_M$ of $\vX_c(M)$ such that $\bs^{-1}\cF_M=\cF_U$, then $\cF_M$ is a foliation, and there exists a (unique) foliation $\cF_N$ on $N$ such that $(U,\bs,\bt)$ is a bi-submersion for $\cF_M$ and $\cF_N$.
\end{cor}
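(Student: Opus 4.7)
My plan is to reduce the statement to Proposition \ref{prop:char}: once $\cF_U$ is verified to be a foliation (that is, locally finitely generated and involutive), characterization (a) of that proposition produces unique foliations on $M$ and $N$ whose pullbacks along $\bs$ and $\bt$ coincide with $\cF_U$. The foliation on $M$ so produced must agree with the hypothesised $\cF_M$ by the uniqueness clause in Lemma \ref{lem:submfol}, so $\cF_M$ is automatically a foliation, and the desired $\cF_N$ is the other output of Proposition \ref{prop:char}. Thus the entire corollary collapses to checking that $\cF_U$ is a foliation.

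The main work is therefore showing $[\cF_U,\cF_U]\subseteq \cF_U$. Expanding, I must control three brackets:
\[
[\Gamma_c(U;\ker d\bs),\Gamma_c(U;\ker d\bs)],\quad [\Gamma_c(U;\ker d\bs),\Gamma_c(U;\ker d\bt)],\quad [\Gamma_c(U;\ker d\bt),\Gamma_c(U;\ker d\bt)].
\]
The first and the third are immediate because $\ker d\bs$ and $\ker d\bt$ are integrable distributions of constant rank (being kernels of submersions), so each of $\Gamma_c(U;\ker d\bs)$ and $\Gamma_c(U;\ker d\bt)$ is closed under the Lie bracket. The cross bracket is the essential point, and this is precisely where the hypothesis $\bs^{-1}\cF_M=\cF_U$ is used. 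By the converse direction of Lemma \ref{lem:submfol}, for any generalised smooth distribution $\cF_M$ on $M$ one automatically has $[\Gamma_c(U;\ker d\bs),\bs^{-1}\cF_M]\subseteq \bs^{-1}\cF_M$; since $\Gamma_c(U;\ker d\bt)\subseteq \cF_U=\bs^{-1}\cF_M$, this forces $[\Gamma_c(U;\ker d\bs),\Gamma_c(U;\ker d\bt)]\subseteq \cF_U$. Local finite generation of $\cF_U$ is also immediate: both summands are modules of sections of finite-rank vector subbundles of $TU$, and joining a local frame for $\ker d\bs$ with one for $\ker d\bt$ yields a local generating family for $\cF_U$.

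With involutivity and local finite generation in hand, $\cF_U$ is a foliation, and applying Proposition \ref{prop:char} completes the proof. I do not foresee any genuine obstacle: the one subtlety to flag is that the cross bracket is controlled \emph{not} by inspecting the structure of $\cF_U$ directly, but by invoking the hypothesis $\bs^{-1}\cF_M=\cF_U$ through the second half of Lemma \ref{lem:submfol}. Everything else — closure of $\Gamma_c(U;\ker d\bt)$ under brackets, local finite generation, and the uniqueness of $\cF_M$ and $\cF_N$ — is routine bookkeeping.
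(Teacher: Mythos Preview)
Your proposal is correct and follows essentially the same approach as the paper: establish that $\cF_U$ is a foliation by controlling the cross bracket $[\Gamma_c(U;\ker d\bs),\Gamma_c(U;\ker d\bt)]$ via the hypothesis $\bs^{-1}\cF_M=\cF_U$, then invoke Proposition~\ref{prop:char}. The only cosmetic difference is that the paper phrases the cross-bracket step in terms of $\bs$-projectability of generating vector fields (referencing the argument in the proof of Proposition~\ref{prop:char}), whereas you invoke the converse clause of Lemma~\ref{lem:submfol} directly; your packaging is arguably cleaner, but the content is the same.
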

\begin{proof}
The existence of the distribution $\cF_M$ such that $\bs^{-1}\cF_M=\cF_U$ implies that elements of $\cF_U$ are $\bs$-projectable vector fields (and they project to $\cF$ by $\bs$). As in the proof of $a)\Rightarrow b)$ in Prop. \ref{prop:char}, we have $[\Gamma_c(U;\ker d\bs),\Gamma_c(U;\ker d\bt)]\subset \cF_U$, which implies that $\cF_U$ is involutive.  $\cF_U$ is also locally finitely generated and the result follows by the implication $a)\Rightarrow b)$ in Prop. \ref{prop:char}.
\end{proof}

 
\begin{definition}\label{dfn:bisub1}
 Let $\bs, \bt : U \to M$ be two submersions. We call the triple $(U,\bt,\bs)$ a \textbf{bi-submersion} if it satisfies one of the 2 equivalent conditions in Proposition \ref{prop:char}.
\end{definition}

\begin{exs}\label{exs:bisubmersion}
\begin{enumerate}
\item Let $\cG \gpd M$ be a Lie groupoid with source and target maps $\bs$ and $\bt$ respectively. Then $M$ is endowed with the foliation $\cF = d\bt(\Gamma_c(A\cG))$. It was shown in \cite{AndrSk} that $(\cG,\bt,\bs)$ is a bisubmersion for the foliation $(M,\cF)$. We will reprove this in Example \ref{ex:gpd}.
\item We recall the notion of full dual pair in Poisson geometry from \cite[\S 8]{We}.
Let $M,N$ be Poisson manifolds. A full dual pair consists of a symplectic manifold $(U,\omega)$ with surjective submersions $s\colon U\to M$ and $t\colon U\to N$ which are Poisson and anti-Poisson maps respectively, and such that $ker(s_*|_u)$ and $ker(t_*|_u)$ are symplectic orthogonal subspaces of $T_uU$ for all $u\in U$. In the following we denote by $X_F$ the hamiltonian vector field of the function $F$ ($F$ may be defined on $M$,$N$ or $U$).

A full dual pair is an example of bi-submersion as in Def. \ref{dfn:bisub1}. Indeed, $\Gamma(U;\ker d\bs)$ is generated by $\{X_{t^*g}:g\in C^{\infty}(N)\}$ while $\Gamma(U;\ker d\bt)$ is generated by $\{X_{s^*g}:g\in C^{\infty}(M)\}$, and $[X_{t^*g},X_{s^*f}]=0$ since $ker(s_*)$ and $ker(t_*)$ are symplectic orthogonals. Therefore $\Gamma(U;\ker d\bs)+\Gamma(U;\ker d\bt)$ is involutive.

The foliation induced on $N$ is $\{X_{g}:g\in C^{\infty}(N)\}$, so that its leaves are exactly the sympelctic leaves of $N$, and similarly for $M$. (To check this use that $X_{t^*g}$ is a $t$-projectable vector field on $U$, which $t$-projects to $X_g$).
\end{enumerate}
\end{exs}

In Poisson geometry, it is well known that the underlying symplectic foliations of two Poisson structures which form a full dual pair have equivalent longitudinal and transversal structure. This is true more generally for bi-submersions as in definition \ref{dfn:bisub1}. 

\begin{prop}\label{prop:uniquefol}
Let $s\colon U\to M$ and $t\colon U\to N$ be surjective submersions. Denote $\cF_U:=\Gamma(U;\ker d\bs)+\Gamma(U;\ker d\bt)$.
Let $\cF_M$ be a foliation on $M$ such that $s^{-1}\cF_M=\cF_U$.
Then $(U,s,t)$ is a bi-submersion. In particular, there is a unique foliation  $\cF_N$  on $M$ such that $t^{-1}\cF_N=\cF_U$.
\end{prop}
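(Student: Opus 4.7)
The plan is to reduce the statement to characterization (a) of Proposition \ref{prop:char}, by showing that $\cF_U$ is a foliation. The key input is the converse direction of Lemma \ref{lem:submfol}, applied to the given foliation $\cF_M$ and the surjective submersion $s$: it yields that $\cF_U = s^{-1}\cF_M$ automatically satisfies
\[
[\Gamma_c(U;\ker ds),\cF_U] \subseteq \cF_U.
\]

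To verify involutivity of $\cF_U = \Gamma_c(U;\ker ds) + \Gamma_c(U;\ker dt)$, I would split the bracket calculation into three pieces. The self-brackets $[\Gamma_c(U;\ker ds),\Gamma_c(U;\ker ds)]$ and $[\Gamma_c(U;\ker dt),\Gamma_c(U;\ker dt)]$ land in their respective summands since the kernels of submersions are involutive. The mixed bracket $[\Gamma_c(U;\ker ds),\Gamma_c(U;\ker dt)]$ sits inside $[\Gamma_c(U;\ker ds),\cF_U]$, which is contained in $\cF_U$ by the displayed inclusion above. Local finite generation of $\cF_U$ is immediate from the constant rank of $\ker ds$ and $\ker dt$, so $\cF_U$ is indeed a foliation.

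With condition (a) of Proposition \ref{prop:char} verified, the implication $a)\Rightarrow b)$ produces a unique foliation $\cF_N$ on $N$ with $t^{-1}\cF_N = \cF_U$, while the hypothesis $s^{-1}\cF_M = \cF_U$ is retained by assumption. Hence $(U,s,t)$ is a bi-submersion in the sense of Definition \ref{dfn:bisub1}. The argument is essentially a shortcut through Corollary \ref{cor:desingularize}: assuming $\cF_M$ is a foliation from the outset eliminates the need to deduce involutivity of $\cF_M$ a posteriori, and the only delicate point—handling the mixed bracket—is disposed of by a direct appeal to Lemma \ref{lem:submfol}. I do not anticipate any genuine obstacle.
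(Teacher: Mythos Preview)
Your proof is correct and follows essentially the same route as the paper's: both use the converse direction of Lemma \ref{lem:submfol} to obtain $[\Gamma_c(U;\ker ds),\cF_U]\subseteq \cF_U$, deduce involutivity of $\cF_U$ from the mixed-bracket inclusion, and then extract $\cF_N$. The only cosmetic difference is that you package the last step via Proposition \ref{prop:char} $a)\Rightarrow b)$, whereas the paper applies Lemma \ref{lem:submfol} directly to $\bt$ after noting $[\Gamma_c(U;\ker d\bt),\cF_U]\subseteq \cF_U$.
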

\begin{proof}
\begin{sloppypar}
By Lemma \ref{lem:submfol} we have $[\Gamma(U;\ker d\bs), \cF_U]\subset \cF_U$. Therefore $[\Gamma(U;\ker d\bs), \Gamma(U;\ker d\bt)]\subset \cF_U$, so $\cF_U$ is involutive. In particular $[\Gamma(U;\ker d\bt), \cF_U]\subset \cF_U$, so applying Lemma \ref{lem:submfol} to the submersion $\bt$ we are done.
\end{sloppypar}
\end{proof}

The following results are proven in \cite[Prop. 2.5]{garmendia2019a}.

\begin{prop}\label{prop:Morita}
Let $(U,\bs,\bt)$ be a bi-submersion. Assume that  $\bs\colon U\to M$ and $\bt\colon U\to N$ are submersions and the $\bs$-fibers and $\bt$-fibers are connected. Then:
\begin{enumerate}
\item There is a bijection between the spaces of leaves of $(N,\cF_N)$ and $(M,\cF_M)$. 
\item Let $L_x$ be a leaf of $N$ through  a point $x$ and $L_y$ the corresponding leaf of $M$. Choose slices $S_x$ at $x$ and $S_y$ at $y$. Then the foliated manifolds $(S_x,\cF_N|_{S_x})$ and $(S_y,\cF_M|_{S_y})$ are diffeomorphic.
\end{enumerate}
\end{prop}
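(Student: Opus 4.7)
The plan for part (a) is to exhibit a leaf correspondence $L_y\mapsto L_x$ via any $u\in U$ with $\bs(u)=y$, $\bt(u)=x$, using the two submersions symmetrically. First I would record the key consequence of Prop.~\ref{prop:uniquefol}: since $\ker d\bs\subset \cF_U$ and $\bs$-fibers are connected, every $\bs$-fiber is contained in a single leaf of $\cF_U$, and similarly for $\bt$. Consequently, if $u,u'$ lie in the same $\bs$-fiber, then $\bt(u),\bt(u')$ lie in the same leaf of $\cF_N$, because $\bt$ sends $\cF_U$ to $\cF_N$ (it is a morphism of foliations by Prop.~\ref{prop:char}(b)). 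This makes the assignment $y\mapsto [\bt(u)]$ independent of the choice of $u\in\bs^{-1}(y)$.

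Next I would show this assignment descends to leaves. Given $y,y'$ in the same leaf of $\cF_M$, I would connect them by a piecewise path of flows of vector fields $X_i\in\cF_M$. Using that $\bs^{-1}\cF_M=\cF_U$ and that $\bs$ is a submersion with connected fibers, each $X_i$ lifts (locally, near a chosen $u\in\bs^{-1}(y)$) to a vector field $\widetilde X_i\in\cF_U$ which is $\bs$-projectable to $X_i$. Flowing by these lifts produces a path in a single leaf of $\cF_U$ from $u$ to some $u'\in \bs^{-1}(y')$, and then $\bt(u),\bt(u')$ lie in the same leaf of $\cF_N$. Swapping the roles of $\bs$ and $\bt$ gives the reverse assignment, and the two are manifestly inverse, proving the bijection.

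For part (b), the plan is to transport transverse data through a suitable local section. Pick $u\in U$ with $\bs(u)=y$, $\bt(u)=x$. Because $\bs$ and $\bt$ are submersions, one can choose a submanifold $B\subset U$ through $u$ of dimension $\dim M=\dim N$ (these dimensions agree by Prop.~\ref{prop:uniquefol} applied to the rank of $\cF_U$ modulo $\ker d\bs$ vs.\ $\ker d\bt$, or can be arranged directly) on which both $\bs|_B$ and $\bt|_B$ are diffeomorphisms onto open neighbourhoods of $y$ and $x$. Then $\varpi_B:=\bt|_B\circ(\bs|_B)^{-1}$ is a local diffeomorphism $M\supset\bs(B)\to \bt(B)\subset N$ sending $y$ to $x$. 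Setting $S_x:=\varpi_B(S_y)$ gives a slice at $x$; by dimension count and transversality of $S_y$ to $L_y$, the image is transverse to $L_x$.

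The remaining and main technical step is to check that $\varpi_B$ intertwines $\cF_M|_{S_y}$ with $\cF_N|_{S_x}$. For this I would argue that $\varpi_B$ is a morphism of foliations $(\bs(B),\cF_M)\to(\bt(B),\cF_N)$: pulling back, $(\varpi_B)^*\cF_N$ is the image under $(\bs|_B)^{-1}_*$ of $(\bt|_B)^*\cF_N$, and both $\bs|_B^{-1}\cF_M$ and $\bt|_B^{-1}\cF_N$ coincide with $\cF_U|_B$ by the bi-submersion property. Restricting to a slice then yields the desired diffeomorphism of foliated manifolds. The hard point is the last verification, which really uses both defining conditions $\bs^{-1}\cF_M=\bt^{-1}\cF_N=\cF_U$ simultaneously, rather than each one in isolation; everything else in the argument is standard slice-and-lift technology for submersions with connected fibers.
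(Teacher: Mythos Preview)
First, note that the paper does not supply its own proof of this proposition: it simply records that the result is established in \cite[Prop.~2.5]{garmendia2019a}. So there is no in-paper argument to compare against, and your proposal must stand on its own.

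Your treatment of part (a) is correct. Since $\Gamma_c(\ker d\bs)\subset\cF_U$ and the $\bs$-fibers are connected, each $\bs$-fiber lies in a single leaf of $\cF_U$; likewise for $\bt$. The lifting argument you outline then produces mutually inverse maps on leaf spaces. (Surjectivity of $\bs$ and $\bt$ is implicitly needed for the bijection; this is harmless since submersions are open and one may replace $M,N$ by the images.)

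Part (b), however, has a genuine gap. You assume $\dim M=\dim N$ in order to build a bisection $B\subset U$ on which both $\bs|_B$ and $\bt|_B$ are local diffeomorphisms, but nothing in the hypotheses forces this. For instance, take $U=\R^2$, $M=\R$, $N=\R^2$, $\bs(x,y)=x$ and $\bt=\id$; then $\cF_U=\Gamma_c(\ker d\bs)$ is involutive, $\cF_M=0$, $\cF_N$ is the vertical-line foliation, and no submanifold of $U$ can be simultaneously a section of $\bs$ and of $\bt$. Your appeal to Prop.~\ref{prop:uniquefol} does not yield the equality of dimensions.

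The repair is to work with a transversal to $\cF_U$ rather than a bisection. Pick a slice $S_u\subset U$ to the leaf of $\cF_U$ through $u$. Since $\ker d_u\bs$ and $\ker d_u\bt$ are both contained in the tangent space of that leaf, the restrictions $\bs|_{S_u}$ and $\bt|_{S_u}$ are immersions; a dimension count (using $\bs^{-1}\cF_M=\cF_U=\bt^{-1}\cF_N$) shows they are local diffeomorphisms onto slices $S_y\subset M$ and $S_x\subset N$. The transverse foliation $\iota_{S_u}^{-1}(\cF_U)$ then coincides with the pullbacks of $\cF_M|_{S_y}$ and $\cF_N|_{S_x}$ under these diffeomorphisms, giving the foliated isomorphism $(S_y,\cF_M|_{S_y})\cong(S_x,\cF_N|_{S_x})$. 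This is the route taken in \cite{garmendia2019a}.
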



\subsection{Reformulating the notion of bi-submersion}


 Let $\bs \colon U \to M$ and $\bt \colon U \to N$ be two submersions. We consider the triple $(W,p,q)$, where
 $W := U \times_{\bs} U \times_{\bt} U$ and $p=pr_1, q=pr_3 : W \to U$. Further we denote $\cF_U := \Gamma(U;\ker d\bs) + \Gamma(U;\ker d\bt)$, as in Prop. \ref{prop:char}. The next lemma is an immediate consequence of the definitions.

\begin{lemma}\label{lem:UW} Let $(U,\bt,\bs)$ be a bi-submersion.  Then
  $(W,p,q)$ is a bi-submersion for the foliation $\cF_U$.
\end{lemma}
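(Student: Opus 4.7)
The plan is to verify the classical bi-submersion identity
\begin{equation*}
p^{-1}\cF_U \;=\; q^{-1}\cF_U \;=\; \Gamma_c(W;\ker dp)+\Gamma_c(W;\ker dq) \;=:\;\cF_W .
\end{equation*}
Since each of $p,q$ is a composition of canonical projections from iterated fibre products of submersions, both are submersions. The argument then splits into (i) showing that $\cF_W$ is a foliation on $W$, and (ii) identifying $\cF_W$ with $p^{-1}\cF_U$ (the case of $q$ being symmetric).

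For (i) I would transplant the argument of Proposition~\ref{prop:char}(b)$\Rightarrow$(a) to the present setting. The bi-submersion property of $(U,\bs,\bt)$ supplies, locally on $U$, a $\bs$-projectable frame $V_1,\dots,V_k$ of $\ker d\bt$ (with $d\bs(V_i)=v_i\circ\bs$ for some $v_i\in\cF_M$), and symmetrically a $\bt$-projectable frame of $\ker d\bs$. Lifting these frames to $W$ through the middle projection $m\colon(u_1,u_2,u_3)\mapsto u_2$ together with the end projections yields local frames of $\ker dp$ that are $q$-projectable into $\cF_U$, and local frames of $\ker dq$ that are $p$-projectable into $\cF_U$. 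The Leibniz bracket identity, combined with the involutivity of $\cF_U$, then gives $[\Gamma_c(W;\ker dp),\Gamma_c(W;\ker dq)]\subseteq\cF_W$; since $\cF_W$ is locally finitely generated as a sum of sections of two vector bundles, it is a foliation.

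For (ii) I would prove both inclusions directly. The inclusion $\cF_W\subseteq p^{-1}\cF_U$ reduces to $dp(\Gamma_c(W;\ker dq))\subseteq p^{*}\cF_U$: on an element $(Y_1,Y_2,0)\in\Gamma_c(W;\ker dq)$, the tangency identity $d\bs(Y_1)=d\bs(Y_2)$ combined with $Y_2\in\ker d\bt\subseteq\cF_U=\bs^{-1}\cF_M$ and the equality $\bs\circ p=\bs\circ m$ on $W$ places $d\bs\circ Y_1$ in $(\bs\circ p)^{*}\cF_M$, whence $Y_1\in p^{*}\cF_U$. For the reverse inclusion, given $X\in\cF_U=\Gamma_c(U;\ker d\bs)+\Gamma_c(U;\ker d\bt)$ I would construct an explicit lift. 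A summand $X^{s}\in\Gamma_c(U;\ker d\bs)$ admits the trivial tangent lift $(X^{s}\circ p,\,0,\,0)\in\Gamma_c(W;\ker dq)$. For a summand $X^{t}\in\Gamma_c(U;\ker d\bt)$, expanding $X^{t}=\sum_i g_i V_i$ locally in the $\bs$-projectable frame, the vector field
\begin{equation*}
\bigl(X^{t}\circ p,\;\sum_i (g_i\circ p)\,(V_i\circ m),\;0\bigr)
\end{equation*}
is tangent to $W$ (thanks to $\bs(u_1)=\bs(u_2)$ and $V_i\in\ker d\bt$), lies in $\ker dq\subseteq\cF_W$, and has $dp$-image equal to $X^{t}\circ p$. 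Summing and patching with a partition of unity gives the desired global lift in $\cF_W$.

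The hard step will be (ii), specifically making the above lifting argument watertight at the module, and not merely pointwise, level. The crux is that the existence of $\bs$-projectable frames for $\ker d\bt$, furnished by the bi-submersion property of $(U,\bs,\bt)$, together with the identity $\bs\circ p=\bs\circ m$ on $W$, is what promotes pointwise relations such as $d\bs(Y_1(w))\in\cF_M|_{\bs(u_1)}$ into genuine module-level equalities.
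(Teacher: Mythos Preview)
Your argument is correct. The paper itself gives no proof of this lemma, stating only that it ``is an immediate consequence of the definitions,'' so there is nothing to compare against at the level of strategy; what you have written is a careful unpacking of that immediacy.

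Two remarks that may streamline your write-up. First, step~(i) is even simpler than you indicate: once you have a local frame $\{A_i\}$ of $\ker dp$ consisting of $q$-projectable vector fields, then for any $B\in\Gamma_c(W;\ker dq)$ the bracket $[A_i,B]$ is $q$-related to $0$, hence lies in $\ker dq$; the Leibniz rule then gives $[\Gamma_c(\ker dp),\Gamma_c(\ker dq)]\subseteq\cF_W$ without invoking involutivity of $\cF_U$ at all. Second, the equality $p^{-1}\cF_U=q^{-1}\cF_U$ follows in one line from functoriality of the pullback and the identities $\bs\circ p=\bs\circ m$, $\bt\circ q=\bt\circ m$ on $W$ (where $m=pr_2$): both sides equal $m^{-1}\cF_U$. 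This also shows directly that $p^{-1}\cF_U$ is a foliation, so by Proposition~\ref{prop:char} one could bypass step~(i) entirely and go straight to the lifting argument in step~(ii). Your explicit lifts there are correct and constitute the genuine content of the verification.
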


\begin{thm}\label{prop:bisub} 
Let $\bs \colon U \to M$ and $\bt \colon U \to N$ be submersions. The following are equivalent:
\begin{enumerate}
\item $(U,\bt,\bs)$ is a bi-submersion. 
\item  For every $\tilde{u}\in U$ there exists a neighborhood $W_{\tilde{u}}$ of $(\tilde{u},\tilde{u},\tilde{u})$ in $W$ and a smooth map $\phi_{\tilde{u}} : W_{\tilde{u}} \to W$ such that $\phi_{\tilde{u}}(u,u,u)=(u,u,u)$ for all $u\in W_{\tilde{u}}$ and $p\circ\phi=q$, $q\circ\phi=p$.
\end{enumerate}
 \end{thm}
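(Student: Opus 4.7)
The plan is to leverage Lemma~\ref{lem:UW} and the morphism-of-bi-submersions framework from \S\ref{sec:bisubmgeneral}, applying the first for one direction and a direct bracket computation for the other.

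For $(a)\Rightarrow(b)$: Lemma~\ref{lem:UW} says $(W,p,q)$ is a bi-submersion for the foliation $(U,\cF_U)$; swapping the roles of $p$ and $q$ shows $(W,q,p)$ is one as well. The diagonal $\Delta=\{(u,u,u):u\in U\}$ is a bisection of each through $(\tilde u,\tilde u,\tilde u)$, and the associated local diffeomorphism of $(U,\cF_U)$ is in both cases the identity. Apply the morphism-of-bi-submersions construction (item~(k) of \S\ref{sec:bisubmgeneral}) to obtain a smooth map $\phi:W_{\tilde u}\to W$ with $p\circ\phi=q$ and $q\circ\phi=p$. The remaining requirement $\phi|_\Delta=\id$ is arranged by basing the construction on the diagonal bisection in both source and target: the identity on $\Delta$ is already a compatible morphism between these two bisections (both being $\Delta$ and carrying the same identity local diffeomorphism), and the general construction extends it smoothly to a neighborhood of $\Delta$ in $W_{\tilde u}$.

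For $(b)\Rightarrow(a)$: Since $\ker d\bs$ and $\ker d\bt$ are constant-rank distributions, $\cF_U$ is locally finitely generated, so by Prop.~\ref{prop:char} it suffices to prove involutivity. First check that $\phi$ is a local diffeomorphism near $\Delta$: the self-composition $\phi^2$ again satisfies $p\circ\phi^2=p$, $q\circ\phi^2=q$ and $\phi^2|_\Delta=\id$, forcing $d\phi$ to be an involution at diagonal points, hence invertible. The identities $p\phi=q$, $q\phi=p$ then imply $\phi_*(\Gamma(\ker dp))=\Gamma(\ker dq)$ and vice versa near $\Delta$. Now fix $Y\in\Gamma(U;\ker d\bs)$ and $Z\in\Gamma(U;\ker d\bt)$. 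Their natural lifts $\hat Y:=(Y,0,0)$ and $\check Z:=(0,0,Z)$ satisfy the fibered-product tangency conditions on $W$ precisely because $Y\in\ker d\bs$ and $Z\in\ker d\bt$, yielding $\hat Y\in\Gamma(\ker dq)$ and $\check Z\in\Gamma(\ker dp)$. Both are $p$-projectable with projections $Y$ and $Z$ respectively (for $\phi_*\check Z$, note $p_*(\phi_*\check Z)=q_*\check Z=Z$). Hence $[\phi_*\check Z,\hat Y]$ lies in $\Gamma(\ker dq)$ by Frobenius integrability of $\ker dq$ and is $p$-projectable to $[Z,Y]=-[Y,Z]$. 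However, every vector in $\ker dq|_{(u_1,u_2,u_3)}$ has the form $(\xi_1,\xi_2,0)$ with $\xi_2\in\ker d\bt$ and $\xi_1-\xi_2\in\ker d\bs$, so $\xi_1=dp(\xi_1,\xi_2,0)$ lies in $(\ker d\bs+\ker d\bt)|_{u_1}$; therefore $-[Y,Z]\in\cF_U$. Combined with the Frobenius closure within $\ker d\bs$ and within $\ker d\bt$, this proves involutivity of $\cF_U$.

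The main obstacle I anticipate lies in $(a)\Rightarrow(b)$: item~(k) of \S\ref{sec:bisubmgeneral} delivers a morphism matching only a single base point, so ensuring pointwise identity along the whole diagonal requires a uniform (parametric) version of the construction, e.g.\ by working with a common local frame of $\ker d\bs$ and $\ker d\bt$ near $\tilde u$ so that the resulting $\phi$ is built canonically from this frame and automatically restricts to $\id$ on $\Delta$.
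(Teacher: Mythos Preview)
Your argument for $(a)\Rightarrow(b)$ is essentially the paper's: use Lemma~\ref{lem:UW} to see that $(W,p,q)$ is a bi-submersion for $\cF_U$, observe that the diagonal is a common bisection of $(W,p,q)$ and $(W,q,p)$ carrying $\Id_U$, and invoke item~(k). The paper states the conclusion in exactly this form; your closing caveat about arranging $\phi|_\Delta=\id$ is the right thing to flag, and it is handled by choosing the morphism in item~(k) so that it sends the chosen bisection to itself.

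For $(b)\Rightarrow(a)$ your route is genuinely different from the paper's, and the idea is attractive: lift $Y\in\Gamma(\ker d\bs)$ and $Z\in\Gamma(\ker d\bt)$ to sections $\hat Y,\ \phi_*\check Z$ of the integrable distribution $\ker dq$ on $W$, bracket there, and read off $[Y,Z]\in\Gamma(\ker d\bs)+\Gamma(\ker d\bt)$ from the explicit description of $\ker dq$. The gap is that $\phi_*\check Z$ need not exist as a vector field: the hypotheses in (b) do \emph{not} force $\phi$ to be a local diffeomorphism near the diagonal. Your justification (``$\phi^2$ satisfies $p\circ\phi^2=p$, $q\circ\phi^2=q$ and $\phi^2|_\Delta=\id$, forcing $d\phi$ to be an involution'') fails because these conditions constrain only the first and third components of $\phi$; the middle component is essentially unconstrained beyond the fibered-product relations and the diagonal condition. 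Concretely, take $U=\R^2$ with $\bs=\bt=\mathrm{pr}_1$, so $W\cong\{(x,y_1,y_2,y_3)\}$ and $p=(x,y_1)$, $q=(x,y_3)$; then $\phi(x,y_1,y_2,y_3)=(x,y_3,y_1,y_1)$ satisfies all of (b) but has $d\phi$ singular everywhere. Without $\phi$ being a local diffeomorphism you cannot form $\phi_*\check Z$, and there is no evident alternative smooth lift of $Z$ to $\Gamma(\ker dq)$ that $p$-projects to $Z$ off the diagonal (this would require solving $d\bs(v_2)=d\bs(Z(u_1))$ with $v_2\in\ker d\bt|_{u_2}$, which need not be possible when $\ker d\bs+\ker d\bt$ has nonconstant rank).

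The paper sidesteps this entirely: it fixes commuting local frames of $\ker d\bs$ and $\ker d\bt$, uses their exponentials to coordinatize $W$ near the diagonal, rewrites the conditions on $\phi$ as equations among these exponentials, and then computes the second derivative at $t=0$ of the commutator-of-flows curve $\varphi^Y_{-t}\varphi^X_{-t}\varphi^Y_{t}\varphi^X_{t}(\tilde u)$, extracting $[X,Y]_{\tilde u}\in\ker d_{\tilde u}\bs+\ker d_{\tilde u}\bt$ directly. This computation uses $\phi$ only through the scalar equations it imposes, never through any pushforward, so the invertibility issue does not arise.
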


The proof of one implication in Prop. \ref{prop:bisub} is easy, and is given just below. The proof of the converse is more involved. We prefer to place this proof in appendix \ref{appendix} so as not to interrupt the flow of ideas.

\begin{proof}[Proof of ``$a)\Rightarrow b)'' $ in Theorem \ref{prop:bisub}]
$(W,p,q)$ is a bi-submersion  for the foliation $\cF_U$ on $U$, by   Lemma \ref{lem:UW}. The embedding of $U$ in $W$, as $\{(u,u,u):u\in U\}$, is a (global) bisection for both the bi-submersion $(W,p,q)$ and the inverse bi-submersion  $(W,q,p)$. In both cases  the diffeomorphism of $U$ it carries is the same, namely, the identity $Id_U$. It follows from item (k) in \S\ref{sec:bisubmgeneral} that for every $\tilde{u}\in U$ there exists a neighborhood $W_{\tilde{u}}$ of $(\tilde{u},\tilde{u},\tilde{u})$ in $W$ and a smooth morphism of bi-submersions  $\phi_{\tilde{u}} : (W_{\tilde{u}},p,q) \to (W,q,p)$ which is the identity on the above bisection. 
\end{proof}
 

Theorem \ref{prop:bisub} allows us to reformulate the notion of bi-submersion as in Definition \ref{dfn:bisub2} below. This definition makes sense beyond the smooth category. 

\begin{definition}\label{dfn:bisub2}
Let $M$ be a smooth manifold. A bi-submersion on $M$ is a triple $(U,\bt,\bs)$ where $\bs \colon U \to M$ and $\bt \colon U \to M$ are submersions and for every $\tilde{u}\in U$ there exists a neighborhood $W_{\tilde{u}}$ of $\{(\tilde{u},\tilde{u},\tilde{u})\}$ in $W$ and a smooth map $\phi_{\tilde{u}} : W_{\tilde{u}} \to W$ such that $\phi_{\tilde{u}}(u,u,u)=(u,u,u)$ for all $u\in W_{\tilde{u}}$ and $p\circ\phi=q$, $q\circ\phi=p$. 
\end{definition}
 
 
From the implication $b)\Rightarrow a)$ in Theorem \ref{prop:bisub}, which is proven in appendix \ref{appendix}, we obtain:

\begin{cor} 
A triple $(U,\bt,\bs)$ satisfying definition \ref{dfn:bisub2} is a bi-submersion in the sense of Def. \ref{dfn:bisub1}.
\end{cor}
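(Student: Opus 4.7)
The plan is to invoke directly the implication $b) \Rightarrow a)$ of Theorem \ref{prop:bisub}, whose proof is deferred to the appendix. A triple $(U,\bt,\bs)$ with $\bs, \bt : U \to M$ submersions satisfying Definition \ref{dfn:bisub2} is, by construction, a triple satisfying condition $b)$ of Theorem \ref{prop:bisub} (in the special case where the two target manifolds coincide). That theorem then yields condition $a)$, namely that the $C^{\infty}(U;\R)$-module $\cF_U = \Gamma_c(U;\ker d\bs) + \Gamma_c(U;\ker d\bt)$ is a foliation on $U$. By Proposition \ref{prop:char}, this is precisely characterization $a)$ of a bi-submersion used in Definition \ref{dfn:bisub1}, so $(U,\bt,\bs)$ is a bi-submersion in the original sense.

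Hence the corollary is a tautological consequence of the theorem, and I do not expect any new obstacle at this stage; all of the substantive content has been packaged into the appendix's proof of $b) \Rightarrow a)$. The key point there is to exploit the local map $\phi_{\tilde{u}}$, which exchanges the roles of $p = pr_1$ and $q = pr_3$ on $W = U\times_{\bs}U\times_{\bt}U$ and fixes the diagonal, in order to establish involutivity of $\cF_U$. Since $\cF_U$ is already locally finitely generated (being the sum of the kernels of two submersions, each of constant rank), the only non-trivial requirement for it to be a foliation is the inclusion $[\Gamma_c(U;\ker d\bs), \Gamma_c(U;\ker d\bt)] \subseteq \cF_U$; this commutator relation can be extracted from $\phi_{\tilde{u}}$ by transporting local flows tangent to $\ker d\bt$ (in the third coordinate of $W$) to flows tangent to $\ker d\bs$ (in the first coordinate), using that $p \circ \phi_{\tilde{u}} = q$ and $q \circ \phi_{\tilde{u}} = p$.
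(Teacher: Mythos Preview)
Your proposal is correct and matches the paper's own treatment: the corollary is stated as an immediate consequence of the implication $b)\Rightarrow a)$ in Theorem \ref{prop:bisub}, with all substantive work deferred to the appendix. Your second paragraph accurately sketches the mechanism of that appendix argument, though it is not strictly needed for the corollary itself.
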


In \S \ref{sec:examples} we will provide examples for the statements made so far.

\subsection{On the formula of the map $\phi_{\tilde{u}}$}\label{sec:furtherexplanations}

Here we give the explicit formula of the map $\phi_{\tilde{u}}$ at points of the form $(u_1,\tilde{u},u_2)$ such that $u_1$ and $u_2$ are near $\tilde{u}$. We'll see in appendix \ref{appendix} that the general formula of $\phi_{\tilde{u}}$ is similar.

To this end, first note that the following conditions imply condition (b) in Thm. \ref{prop:bisub}:
\begin{enumerate}
\item[(1)] For every $v \in U$ such that $(\tilde{u},\tilde{u},v) \in W_{\tilde{u}}$ we have $\phi_{\tilde{u}}(\tilde{u},\tilde{u},v)=(v,v,\tilde{u})$
\item[(2)] For every $v' \in U$ such that $(v',\tilde{u},\tilde{u}) \in W_{\tilde{u}}$ we have $\phi_{\tilde{u}}(v',\tilde{u},\tilde{u})=(\tilde{u},v',v')$
\end{enumerate}
If we assume that $\phi_{\tilde{u}}$ satisfies (1) and (2), its role becomes much clearer:

Consider $X \in \Gamma(U;\ker d\bs)$ and $Y \in \Gamma(U;\ker d\bt)$ and denote $\varphi^X$ and $\varphi^Y$ their flows. Then for every $t$ sufficiently small the elements $(\varphi_t^X(\tilde{u}),\tilde{u},\tilde{u})$ belong in $W'$, so $\phi_{\tilde{u}}(\varphi_t^X(\tilde{u}),\tilde{u},\tilde{u}) = (\tilde{u},\varphi_t^X(\tilde{u}),\varphi_t^X(\tilde{u}))$. Differentiating this we get $(\phi_{\tilde{u}})_*(X(\tilde{u}),0,0)=(0,X(\tilde{u}),X(\tilde{u}))$. Likewise, for every small $t$ the elements $(\tilde{u},\tilde{u},\varphi^Y_t(\tilde{u}))$ belong in $W'$, so $\phi_{\tilde{u}}(\tilde{u},\tilde{u},\varphi^Y_t(\tilde{u})) = (\varphi^Y_t(\tilde{u}), \varphi^Y_t(\tilde{u}), \tilde{u})$. Again by differentiating we have $(\phi_{\tilde{u}})_*(0,0,Y(\tilde{u}))=(Y(\tilde{u}),Y(\tilde{u}),0)$. Eventually we get: 
\begin{multline*}
(\phi_{\tilde{u}})_*(X(\tilde{u}),0,Y(\tilde{u})) = (\phi_{\tilde{u}})_*(X(\tilde{u}),0,0) + (\phi_{\tilde{u}})_*(0,0,Y(\tilde{u})) \\ = (0,X(\tilde{u}),X(\tilde{u})) + (Y(\tilde{u}),Y(\tilde{u}),0) = (Y(\tilde{u}),X(\tilde{u})+Y(\tilde{u}),X(\tilde{u}))
\end{multline*}
The equation $(\phi_{\tilde{u}})_*(X(\tilde{u}),0,Y(\tilde{u})) = (\phi_{\tilde{u}})_*(X(\tilde{u}),0,0) + (\phi_{\tilde{u}})_*(0,0,Y(\tilde{u}))$ says that for every small $t$ the element $(\varphi^X_t(\tilde{u}),\tilde{u},\varphi^Y_t(\tilde{u}))$ belongs in $W'$. On the other hand, the equation $$(\phi_{\tilde{u}})_*(X(\tilde{u}),0,Y(\tilde{u}))=(Y(\tilde{u}),X(\tilde{u})+Y(\tilde{u}),X(\tilde{u}))$$ gives a way to compute the middle term $z$ in the expression $\phi_{\tilde{u}}(\varphi^X_t(\tilde{u}),\tilde{u},\varphi^Y_t(\tilde{u})) = (\varphi^Y_t(\tilde{u}),z,\varphi^X_t(\tilde{u}))$. Specifically, $$z=\varphi^{X+Y}_{-t}(\tilde{u})$$ 
For the sake of completeness, we clarify further the flow $\varphi^{X+Y}$. In \cite{Posilicano} we find the following formula for the flow of the sum of two time-dependent vector fields: 
\begin{eqnarray}\label{eqn:timedep1}
\varphi^{X+Y}_{t,s} = \varphi^X_{t,0} \circ \varphi_{t,s}^{(\varphi^X_{0,t})_{\ast}(Y_t)} \circ \varphi^{X}_{0,s} 
\end{eqnarray}
When the vector field $X$ is not time-dependent we have $\varphi^{X}_{t,s}=\varphi^X_{t-s}$, so formula \eqref{eqn:timedep1} becomes:
\begin{eqnarray}\label{eqn:timedep2}
\varphi^{X+Y}_{t-s} = \varphi^X_{t} \circ \varphi_{t-s}^{(\varphi^X_{-t})_{\ast}(Y)} \circ \varphi^{X}_{-s} 
\end{eqnarray}
Whence 
\begin{eqnarray}\label{eqn:timedep3}
z = \varphi^{X+Y}_{-t}(\tilde{u}) = \left(\varphi^X_{-t} \circ \varphi_{-t}^{(\varphi^X_{t})_{\ast}(Y)}\right) (\tilde{u}) 
\end{eqnarray}
Moreover, puting $t=1$ and $s=0$ in \eqref{eqn:timedep2} we obtain: 
\begin{eqnarray}\label{eqn:flowsum}
exp(X+Y)(\tilde{u}) = exp(X)\left(exp\left(\varphi^X_{-1}\right)_{\ast}(Y)(\tilde{u})\right)
\end{eqnarray}


\subsection{Examples}\label{sec:examples}

Here we provide a few examples for Theorem \ref{prop:bisub}. 

\begin{ex}\label{ex:gpd}
If $\cG\rightrightarrows M$ is a Lie groupoid, there exists a \emph{global} map $\phi$.
Indeed, put $W = \cG \times_{s} \cG \times_{t} \cG$ and consider the map 
\begin{equation}\label{eq:formula}
\phi_{\cG} : W\to W, \quad (g,h,\zeta) \mapsto (\zeta,gh^{-1}\zeta ,g)
\end{equation}
It obviously satisfies $p\circ\phi_{\cG}=q$, $q\circ\phi_{\cG}=p$, and fixes points $(g,g,g)$ for every $g \in \cG$. Also note that $\phi_{\cG}$ satisfies conditions (1) and (2) in \S \ref{sec:furtherexplanations} for every $(g,g,h)$ and $(h,\zeta,\zeta)$. So locally, the map $\phi_{\cG}$ is given by the formula \eqref{eqn:timedep3}. Further, $\phi_{\cG}$ is an involution, \ie $\phi_{\cG} \circ \phi_{\cG}=Id_W$.
\end{ex}

In fact formula \ref{eq:formula} models the map $\phi_{\tilde{u}}$ mentioned in theorem \ref{prop:bisub} for several interesting bisubmersions, as we show in the next examples.


\begin{ex}\label{ex:add}
Recall from Ex. \ref{ex:pathol} the path-holonomy bi-submersion $(V \times B^n, \bs,\bt)$ of the foliation $(M,\cF)$ at the point $x \in M$. That is, $V$ is an open neighborhood of $x$ in $M$, $B^n$ is a small ball around $0$ of $\R^n = \cF_x$, $\bs$ is the projection and $\bt(x,\lambda_1,\ldots,\lambda_n)=exp\left(\sum_{i=1}^{n}\lambda_i X_i\right)(x)$, where $X_1,\ldots, X_n$ are generators of $\cF$ in a neighborhood of $x$. Now consider $B^n$ as the additive (abelian) local Lie group. That is to say, the map $\phi_{V \times B^n}$ is given by: 
\begin{equation}\label{eq:add}
\phi_{V \times B^n}((x,\overrightarrow{\lambda}),(y,\overrightarrow{\mu}),(z,\overrightarrow{\nu}) = 
((z,\overrightarrow{\nu}),(y,\overrightarrow{\lambda} - \overrightarrow{\mu} + \overrightarrow{\nu}),(x,\overrightarrow{\lambda})
\end{equation}
for every $\overrightarrow{\lambda}, \overrightarrow{\mu}, \overrightarrow{\nu} \in B^n$ such that the sum $\overrightarrow{\lambda} - \overrightarrow{\mu} + \overrightarrow{\nu}$ is defined. Notice that the unique foliation constructed in Prop. \ref{prop:uniquefol} is exactly the original foliation $\cF$ (restricted to $V$). That is because the map $\bt$ is defined using generators of $\cF$.
\end{ex}

\begin{remark}\label{rmk:gpdadd}
Example \ref{ex:add} can also be seen as the local version of example \ref{ex:gpd}. See \cite[Ex. 3.4, item 4]{AndrSk}. 
\end{remark}

\begin{ex}\label{ex:isotropy}
Let $A \to M$ be a Lie algebroid, which is not necessarily integrable. Put $\rho : A \to TM$ its anchor map and fix a point $x \in M$. Let $V$ be an open neighborhood of $x$ in $M$. 
Let $H_x$ be the connected and simply connected Lie group whose Lie algebra is $\h_x$. Let $U$ be a small neighborhood of $(0,e)$ in $V \times H_x$ and put $\bs : U \to M$ the projection. Consider the submersion $\bt : U \to M$ defined as follows:
\begin{itemize}
\item Choose sections $\xi_1,\ldots,\xi_k$ in $\h_U$ such that $\langle\xi_1\rangle,\ldots,\langle\xi_k\rangle$ is a basis of $\h_x$.
\item Let $(y,h) \in U$. By shrinking $U$ if necessary, we may assume $h$ is sufficiently close to the identity, so that $h=exp(\xi)$ for a unique $\xi \in \h_x$. Let $\xi = \lambda_1\xi_1 + \ldots + \lambda_k\xi_k$ for unique $\lambda_1,\ldots,\lambda_k \in \R$. 
\item Put $\bt(y,h) = exp(\sum_{i=1}^k\lambda_i\rho(\xi_i))(y)$. Of course, $\rho(\xi_i)=0$ for every $i=1,\ldots,k$, so $\bt(y,h)=\bs(y,h)=y$.
\end{itemize}
Now let us show that the triple $(U,\bt,\bs)$ is a bi-submersion. Put $\tilde{u}=(x,e)$. Near the identity element $e \in H_x$ the group multiplication is defined only locally (Baker-Campbell-Hausdorff formula). In other words, there is a small neighbourhood $W$ of $\tilde{u}$ in $U$ where the map $\phi_{\tilde{u}} : W \times_{\bs} W \times_{\bt} W \to W \times_{\bs} W \times_{\bt} W$ with the following formula: $$\phi_{\tilde{u}}((y,h_1),(y,h_2),(y,h_3)) = ((y,h_3),(y,h_1h_2^{-1}h_3),(y,h_1))$$
As we observed in the last item above, this bi-submersion induces the trivial foliation $\cF = 0$ on $V$. That is, the foliation by points of $V$.
\end{ex}

Examples \ref{ex:add} and \ref{ex:isotropy} can be combined to give a bi-submersion for the Weinstein groupoid. This is the subject of \S \ref{sec:pathhol}.



The following examples are by Marco Zambon. They show that the answer to the question ``does every pair of submersions give rise to a bi-submersion?'', is negative.

\begin{ex}\label{ex:marco1}
Fix $f\in C^{\infty}(\RR)$ (a simple case to consider is $f(x)=x^3$).
We consider two submersions $U=\RR^2\to M=\RR$. The first is $s(x,y)=y$. The second is $t(x,y)=f(x)-y$.
Notice that $\ker(s_*)=\pd{x}$ and $\ker(t_*)=\pd{x}+f'\pd{y}$.

$\hat{\cF}$ is involutive if{f} $f'$ is nowhere zero or it is the zero function. This is seen computing $[\pd{x},f'\pd{y}]=f''\pd{y}$, and noticing that $f''$ is a $C^{\infty}(\RR)$-multiple of $f'$ if{f}  $f'$ is nowhere zero or it is the zero function\footnote{We also have that $\hat{\cF}$ is regular exactly when $f'$ is nowhere zero or it is the zero function. Hence, in this very example, $\hat{\cF}$ is involutive if{f} it is regular.}.

The set\footnote{We denote the coordinates of $\R^6$ by $(x_1,y_1,x_2,y_2,x_3,y_3)$.} $W=U \times_s U \times_t U $ is $$\{y_1=y_2,f(x_2)-y_2=f(x_3)-y_3\}\subseteq \R^6$$ Hence it is diffeomorphic to $\RR^4$ with coordinates
$(x_1,y_1,x_2,x_3)$, with projections $p$ mapping to $(x_1,y_1)\in \RR^2$ and $q$ mapping to $(x_3,f(x_3)-f(x_2)+y_1)\in \RR^2$.

We have  $\ker(q_*)=\la \pd{x_1},\pd{x_2}+f'({x_2})\pd{y_1} \ra$ and
$\ker(p_*)=\la \pd{x_2},\pd{x_3} \ra$.
Further, $$p^{-1}(\hat{\cF})=\la \pd{x_1}, \pd{x_1}+f'({x_1})\pd{y_1}, 
\pd{x_2},\pd{x_3}\ra.$$

If $f'$ never vanishes, or if it is the zero function, we see that $\ker(p_*)+\ker(q_*)$ agrees with $p^{-1}(\hat{\cF})$. Otherwise, neither module of vector fields is contained in the other.

Let us fix a point $(x,y)\in \RR^2$ and try to find a map $\phi:=\phi_{(x,y,x,x)}$ defined near $(x,y,x,x)$. The conditions $p\circ\phi=q$, $q\circ\phi=p$ leave as only possibility maps of the form
$$(x_1,y_1,x_2,x_3)\mapsto (x_3,f(x_3)-f(x_2)+y_1,z,x_1)$$ where $z=z(x_1,y_1,x_2,x_3)$ satisfies $f(z)=f(x_1)+f(x_3)-f(x_2)$. If $f'$ never vanishes then $f$ is a diffeomorphism onto its image, and (nearby $(x,y,x,x)$) we can invert it and take $z=f^{-1}(f(x_1)+f(x_3)-f(x_2))$. 
If $f$ is constant, any smooth function $z$ will do.

In the remaining cases, Thm. \ref{prop:bisub} states that there is a point $(x,y)\in \R^2$ for which there is no smooth function  $z$ as above. For instance, taking $f=x^3$ and the point $(0,y)\in \RR^2$ (for any $y$), the only possibility for $z$ is $z(x_1,y_1,x_2,x_3)=\sqrt[3]{x_1^3+x_3^3-x_2^3}$, which is continuous but not smooth.
\end{ex}

We now present an example where  $\hat{\cF}$ is regular but not involutive. 
\begin{ex}\label{ex:marco2}
We consider two submersions $U=\RR^3\to M=\RR$. The submersion $s$ maps $(x,y,z)$ to $(x,z)$, while $t$ maps it to $(y,z-xy)$.
Notice that $\ker(s_*)=\la\pd{y}\ra$ and $\ker(t_*)=\la\pd{x}+y\pd{z}\ra$.
So $\hat{\cF}$ is non-involutive (it is the kernel of the standard contact 1-form $dz-ydx$).

The set\footnote{We denote the coordinates of $\R^9$ by $(x_1,y_1,z_1,x_2,y_2,z_2,x_3,y_3,z_3)$} $W=U \times_s U \times_t U $ is the subset of $\RR^9$ given by $\{x_1=x_2,z_1=z_2,y_2=y_3,z_2-x_2y_2=z_3-x_3y_3\}$.
 Hence it is diffeomorphic to $\RR^5$ with coordinates
$(x_1,y_1,z_1,y_2,x_3)$, with projections $p$ mapping to $(x_1,y_1,z_1)\in \RR^3$ and $q$ mapping to $(x_3,y_2,z_1+(x_3-x_1)y_2)$.

We have $\ker(p_*)=\la \pd{y_2},\pd{x_3} \ra$ and
 $\ker(q_*)=\la \pd{y_1},\pd{x_1}+ y_2 \pd{z_1} \ra$.
Further, $$p^{-1}(\hat{\cF})=\la \pd{y_1}, \pd{x_1}+ y_1 \pd{z_1}, 
\pd{y_2},\pd{x_3}\ra.$$
So neither of  $\ker(p_*)+\ker(q_*)$  or  $p^{-1}(\hat{\cF})$ is contained in the other.

We fix $(x,y,z)\in \RR^3$ and try to find a map $\phi:=\phi_{(x,y,z,y,x)}$ defined near $(x,y,z,y,x)$. The condition $q\circ\phi=p$ leaves as only possibility 
$$(x_1,y_1,z_1,y_2,x_3)\mapsto (x_3,y_2,z_1+(x_3-x_1)y_2,z,w).$$ Here
the condition $p\circ\phi=q$ forces $z=y_1$ and $w=x_1$. But
this leads to contradiction, as $z_1+(x_3-x_1)y_2+(x_1-x_3)y_1\neq z_1$.
We conclude that a map $\phi$ does not exist.  
\end{ex}

\section{Bi-submersions of an integrable Lie algebroid}\label{sec:bisubmintegrable}

The purpose of this section is to extend the formulation of bi-submersions we gave in definition \ref{dfn:bisub2} to the bi-submersions of an integrable Lie algebroid introduced in \cite[Definition 2.4]{Za18}. We start by recalling this definition
. To this end, we have to fix some notation first. 

\begin{notation}\label{not:relbisubm}
Let $\cG \gpd M$ be a Lie groupoid. We denote its source and target maps by $\bs, \bt$ respectively and its Lie algebroid by $A\cG$. Recall that $A\cG = (\ker d\bs)|_M$ is the pullback of the vertical vector bundle $\ker d\bs \to \cG$ by the unit inclusion map $M \to \cG$. 
\begin{enumerate}
\item Put $\Gamma^{RI}_c T\cG$ the $C^{\infty}(\cG)$-module of (compactly supported) right-invariant vector fields of $\cG$. Recall that this is a submodule of $\Gamma_c \ker d\bs$. We denote its elements by $\rar{\sigma}$. Recall that every section $\sigma$ of the Lie algebroid $A\cG$ corresponds to a unique right-invariant vector field $\rar{\sigma}$.
\item Likewise, put $\Gamma^{LI}_c T\cG$ the module of (compactly supported) left-invariant vector fields of $\cG$, which is a submodule of $\Gamma_c \ker d\bt$. We denote a vector field as such by $\lar{\sigma}$. It also corresponds to a unique section of $A\cG$ as well.
\item We denote $(M,\cF^{\cG})$ the foliation defined by $\cG$, that is $\cF^{\cG} = d\bt(A\cG)$.
\end{enumerate}
\end{notation}

Definition \ref{dfn:relbisubm} below is really \cite[Definition 2.4]{Za18}, applied to the singular subalgebroid $\Gamma_c(A\cG)$.

\begin{definition}\label{dfn:relbisubm}
A \textit{bi-submersion of $A\cG$} is a triple $(U,\psi,\cG)$ such that:
\begin{enumerate}
\item $\psi : U \to \cG$ is a smooth map.
\item The maps $\bs_U = \bs \circ \psi$ and $\bt_U = \bt \circ \psi$ are submersions.
\item For every $\sigma \in \Gamma_c(A\cG)$ there is $Z \in \cX_c(U)$ which is $\psi$-related to $\rar{\sigma}$ and $W \in \cX_c(U)$ which is $\psi$-related to $\lar{\sigma}$.
\item $\psi^{-1}(\Gamma^{RI}_c T\cG) = \Gamma_c(U;\ker d\bs_U)$ and $\psi^{-1}(\Gamma^{LI}_c T\cG) = \Gamma_c(U;\ker d\bt_U)$.
\end{enumerate}
\end{definition}

\begin{exs}\label{exs:relbisubm}
\begin{enumerate}
\item The Lie groupoid $\cG \gpd M$ is a bi-submersion of $A\cG$ with $\psi=\id$. Notice that $(\cG,\bs,\bt)$ is also a bisubmersion of the foliation $(M,\cF^{\cG})$.
\item Suppose the $\bs$-fibers of $\cG \gpd M$ have dimension $m$ and let $V$ be an open subset of $\cG$ which contains $1(\bs(V))$ (here $1 : M \to \cG$ is the unit embedding). There is an open subset $U \subseteq M \times \R^m$ which contains $\bs(V) \times \{0\}$ and a diffeomorphism $\psi : U \to V$\footnote{Choose a frame $\sigma_1,\ldots,\sigma_m$ of $\Gamma(\bs(U);A\cG)$ and consider the corresponding right-invariant vector fields $\rar{\sigma}_1,\ldots,\rar{\sigma}_m$. Define $\psi(x,\lambda_1,\ldots,\lambda_m)=exp(\sum_{i=1}^{m}\lambda_i\rar{\sigma}_i)(1_x)$.}. The triple $(U,\psi,\cG)$ is a bi-submersion of $A\cG$.
\item Let $(U,\bs,\bt)$ be a bi-submersion of the foliation $(M,\cF)$. Consider the pair groupoid $M \times M$ and the map $\psi = (\bt,\bs)$. It is shown in \cite[\S 2.3.1]{Za18} that $(U,\psi,M \times M)$ satisfies items (a) and (c) of definition \ref{dfn:relbisubm}. Item (b) is satisfied only for vector fields $X \in \cF$, not for every vector field of $M$. In fact, $(U,\psi,M \times M)$ is a bi-submersion for the singular subalgebroid $\cF$ of $TM=A(M \times M)$, in the sense of \cite[Definition 2.4]{Za18}.
\end{enumerate}
\end{exs}

Looking at the third item in examples \ref{exs:relbisubm} and following the definitions, we see that for every $\tilde{u} \in U$, the following diagram commutes:
\begin{displaymath}
 \xymatrixcolsep{4pc}\xymatrix{
 W_{\tilde{u}} \ar[r]^-{\psi \times_{\bs}\psi \times_{\bt}\psi} \ar[d]_{\phi_{\tilde{u}}} & (M \times M) \times_{pr_2} (M \times M) \times_{pr_1} (M \times M) \ar[d]^{\phi_{M \times M}} \\
 W_{\tilde{u}} \ar[r]^-{\psi \times_{\bs}\psi \times_{\bt}\psi} & (M \times M) \times_{pr_2} (M \times M) \times_{pr_1} (M \times M) 
 }  
\end{displaymath}
Note that the formula of the map $\phi_{M \times M}$ is the one defined in Example \ref{ex:gpd}, in the particular the case of the pair groupoid $M \times M \gpd M$. In the second item of examples \ref{exs:relbisubm}, since the map $\psi$ is a diffeomorphism, it is straightforward that there is a map $\phi_{\tilde{u}}$ such that the following diagram commutates:
\begin{equation}\label{diag:relbisubm}
 \xymatrixcolsep{4pc}\xymatrix{
 W_{\tilde{u}} \ar[r]^-{\psi \times_{\bs}\psi \times_{\bt}\psi} \ar[d]_{\phi_{\tilde{u}}} & \cG \times_{\bs} \cG \times_{\bt} \cG \ar[d]^{\phi_{\cG}} \\
 W_{\tilde{u}} \ar[r]^-{\psi \times_{\bs}\psi \times_{\bt}\psi} & \cG \times_{\bs} \cG \times_{\bt} \cG 
 }  
\end{equation}

As we saw in \S \ref{sec:furtherexplanations}, the explicit formula of $\phi_{\tilde{u}}$ in diagram \ref{diag:relbisubm} is
\begin{eqnarray}\label{eqn:formula}
\phi_{\tilde{u}}(u_1,u,u_2) = (u_2,\varphi^{X+Y}_{-t}(u),u_1)
\end{eqnarray}



\subsection{Reformulating the notion of bi-submersion for $A\cG$}\label{sec:reform}

Recall \cite[Lemma 2.12]{Za18} that if $(U,\psi,\cG)$ is a bi-submersion of $A\cG$ in the sense of Dfn. \ref{dfn:relbisubm}, then $(U,\bs_{\psi},\bt_{\psi})$ is a bi-submersion of the underlying foliation $(M,\cF^{\cG})$, where $\bs_{\psi} = \bs \circ \psi$ and $\bt_{\psi}=\bt \circ \psi$. The purpose of this section is to show the converse of this statement and compare with the notion of bi-submersion introduced in Definition \ref{dfn:bisub2}. This is essential in order to generalise the notion of bi-submersion to non-integrable Lie algebroids in \S \ref{sec:pathhol}. 

\begin{prop}\label{prop:bisubmfolgpd1}
Let $(U,\bt_U,\bs_U)$ be a bi-submersion of the foliation $(M,\cF^\cG)$ induced by the Lie groupoid $\cG \gpd M$. If $\psi : U \to \cG$ is a smooth map which is a submersion and satisfies $\bs_U = \bs \circ \psi$ and $\bt_U = \bt \circ \psi$, then $(U,\psi,\cG)$ is a bi-submersion of $A\cG$, in the sense of Dfn. \ref{dfn:relbisubm}.
\end{prop}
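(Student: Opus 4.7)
Conditions (a) and (b) of Definition~\ref{dfn:relbisubm} hold immediately: (a) is the hypothesis, while (b) holds because $\bs_U=\bs\circ\psi$ and $\bt_U=\bt\circ\psi$ are submersions as part of the given bi-submersion data. The task is therefore to verify (c) and (d). Throughout, only the hypothesis that $\psi$ is a submersion intertwining source and target will be used---the involutivity encoded by $(U,\bt_U,\bs_U)$ being a bi-submersion over $\cF^{\cG}$ is not needed for this direction (the converse is precisely \cite[Lemma~2.12]{Za18}).

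For (c), given $\sigma\in\Gamma_c(A\cG)$, the plan is to lift $\rar{\sigma}$ and $\lar{\sigma}$ to $\psi$-projectable vector fields on $U$. Since $\psi$ is a submersion, it locally has the form of a projection $V\times F\to V$, and any smooth vector field on $\cG$ admits a horizontal lift with vanishing vertical component in such charts. A partition of unity on $U$ assembles these local lifts into the required $Z,W\in\cX(U)$.

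For (d), I claim $\psi^{-1}(\Gamma^{RI}_cT\cG)=\Gamma_c(U;\ker d\bs_U)$ (the statement for the left-invariant side being symmetric). The inclusion ``$\subseteq$'' is immediate from the chain rule: $d\psi\circ X\in\psi^*\Gamma_c(\ker d\bs)$ forces $d\bs_U\circ X=d\bs\circ d\psi\circ X=0$. The reverse inclusion is the technical heart. Given $X\in\Gamma_c(U;\ker d\bs_U)$, the section $d\psi\circ X$ of $\psi^*T\cG$ takes values pointwise in $\ker d\bs$; right-translation identifies $\ker d\bs\cong\bs^*A\cG$ globally, so a local frame $\sigma_1,\ldots,\sigma_n$ of $A\cG$ over a neighborhood $V\subseteq M$ of $\bs_U(\supp X)$ produces a local frame $\rar{\sigma_1},\ldots,\rar{\sigma_n}$ of $\ker d\bs$ over $\bs^{-1}(V)$. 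On $\psi^{-1}(\bs^{-1}(V))$ one expands $d\psi\circ X=\sum\lambda_i(\rar{\sigma_i}\circ\psi)$ with smooth $\lambda_i$, and a partition of unity---together with bump functions on $M$ making the local frame sections into globally defined compactly supported sections of $A\cG$---yields a global $C^\infty_c(U)$-linear expression. The main obstacle is precisely this step: converting a pointwise membership in $\ker d\bs$ into a finite module-theoretic presentation using the frame $\{\rar{\sigma_i}\}$ while controlling supports, an argument made possible only by the global identification $\ker d\bs\cong\bs^*A\cG$.
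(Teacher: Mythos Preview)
Your argument is correct, and it takes a genuinely different route from the paper's. For item (d), the paper passes through the pair groupoid: it views $\cF^{\cG}$ as a singular subalgebroid of $TM=A(M\times M)$, invokes \cite[Prop.~2.10]{Za18} to recast $(U,\bs_U,\bt_U)$ as a bi-submersion for that singular subalgebroid, and then reads off $(\bs_U,\bt_U)^{-1}(\overrightarrow{\cF^{\cG}})=\Gamma_c(U;\ker d\bs_U)$ from item (d) of \cite[Def.~2.4]{Za18}. That detour \emph{does} use the bi-submersion hypothesis, because $\overrightarrow{\cF^{\cG}}$ is in general a singular submodule of $\Gamma_c(M\times M;\ker d\,pr_2)$, so the inclusion $\Gamma_c(U;\ker d\bs_U)\subseteq(\bs_U,\bt_U)^{-1}(\overrightarrow{\cF^{\cG}})$ is not automatic. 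Your argument instead exploits that, on $\cG$ itself, the right-invariant vector fields form a \emph{frame} of $\ker d\bs$ (via the identification $\ker d\bs\cong\bs^{\ast}A\cG$), so that $\Gamma^{RI}_c T\cG=\Gamma_c(\cG;\ker d\bs)$ as $C^{\infty}(\cG)$-modules; the equality $\psi^{-1}(\Gamma^{RI}_c T\cG)=\Gamma_c(U;\ker d\bs_U)$ then follows by elementary linear algebra and partitions of unity, with no need for the involutivity encoded in the bi-submersion condition. Thus your proof is more elementary and in fact establishes a slightly stronger statement: only the hypotheses that $\psi$ is a submersion and that $\bs\circ\psi$, $\bt\circ\psi$ are submersions are required. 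The paper's route, on the other hand, has the virtue of situating the result within the general formalism of singular subalgebroids developed in \cite{Za18}.
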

\begin{proof}
We only need to confirm that items (c) and (d) in Dfn. \ref{dfn:relbisubm} hold. Item (c) holds because $\psi$ is a submersion. For item (d), we will only prove that $\psi^{-1}(\Gamma^{RI}_c T\cG) = \Gamma_c(U;\ker d\bs_U)$. The proof of the equality $\psi^{-1}(\Gamma^{LI}_c T\cG) = \Gamma_c(U;\ker d\bt_U)$ is similar.


The foliation $(M,\cF^\cG)$ is a singular subalgebroid of the Lie algebroid $TM = A(M \times M)$, in the sense of \cite{Za18}. Consider the $C^{\infty}(M \times M)$-module $\overrightarrow{\cF^{\cG}}$ of right-invariant vector fields of the pair groupoid $M \times M$ corresponding to $\cF^{\cG}$ (\cf item (a) in Notation \ref{not:relbisubm} and \cite[\S 1.4]{Za18}). It was shown in \cite[Prop. 2.10]{Za18} that $(U,\bs_U,\bt_U)$ is a bisubmersion of $(M,\cF^{\cG})$ if and only if $(U,(\bs_U,\bt_U),M \times M)$ is a bi-submersion in the sense of definition \cite[Definition 2.4]{Za18}. On the other hand, since $\cF^{\cG} = d\bt(\Gamma_c A\cG)$, the correspondence of sections of $A\cG$ with right-invariant vector fields of $\cG$ implies $\overrightarrow{\cF^{\cG}}=(d\bt,d\bs)(\Gamma_c^{RI}T\cG)$. Therefore: $$\psi^{-1}(\Gamma_{c}^{RI}T\cG)=\psi^{-1}((\bs,\bt)^{-1}(\overrightarrow{\cF^{\cG}}))=(\bs_U,\bt_U)^{-1}(\overrightarrow{\cF^{\cG}})=\Gamma_c(U;\ker d\bs_U)$$
\end{proof}

We focus on bi-submersions as the ones discussed in the second example of Ex. \ref{exs:relbisubm}. Let us define them precisely:
\begin{itemize}
\item Let $U \subset M$ an open neighbourhood of $x \in M$ and $\sigma = \{ \sigma_1,\ldots,\sigma_n\}$ a local frame of sections which commute by the Lie bracket. Here $\sigma_i \in \Gamma_{c}(U;A\cG)$  for every $i = 1,\ldots,n$.
\item Put $\rar{\sigma}_1,\ldots,\rar{\sigma}_n \in \Gamma_c^{RI}T\cG$ and $\lar{\sigma}_1,\ldots,\lar{\sigma}_n \in \Gamma_c^{LI}(T\cG)$ the right-invariant and left-invariant vector fields corresponding to the frame $\sigma$. For every $i=1,\ldots,n$ put $X_i = d\bt_{\cG}(\sigma_i)$ the associated vector field in $(M,\cF)$.
\item Since the map $d_{x}\bt_{\cG} : A_x \cG \to \cF_{x}$ is surjective, the restrictions to $U$ of the vector fields $X_1,\ldots,X_n$ generate $\cF|_{U}$. Put $(U^{\sigma},\bs_U^{\sigma},\bt_U^{\sigma})$ the path-holonomy bi-submersion defined by $X_1,\ldots,X_n$. Notice that $U^{\sigma} \subseteq M \times \R^n$, where $n$ is the rank of $A\cG$.
\item The diffeomorphism $\psi^{\sigma} : U^{\sigma} \to \cG$ defined by $$\psi^{\sigma}(y,\lambda_1,\ldots,\lambda_n) = exp(\sum_{i=1}^n\lambda_i \rar{\sigma}_i)(1_y)$$ makes $(U^{\sigma},\psi^{\sigma},\cG)$ a bi-submersion of $A\cG$ in the sense of Dfn. \ref{dfn:relbisubm}.
\item For every $\tilde{u} \in U^{\sigma}$ put $W^{\sigma}_{\tilde{u}} \subseteq U^{\sigma} \times_{\bs_U} U^{\sigma} \times_{\bt_U} U^{\sigma}$ an appropriate neighbourhood of $(\tilde{u},\tilde{u},\tilde{u})$. Then there is a map $\phi^{\sigma}_{\tilde{u}}$ which makes the following diagram commute:
\begin{eqnarray}\label{diag:constrank}
\xymatrixcolsep{4pc}\xymatrix{
 W^{\sigma}_{\tilde{u}} \ar[r]^-{\psi^{\sigma} \times_{\bs}\psi^{\sigma} \times_{\bt}\psi^{\sigma}} \ar[d]_{\phi^{\sigma}_{\tilde{u}}} & \cG \times_{\bs} \cG \times_{\bt} \cG \ar[d]^{\phi_{\cG}} \\
 W^{\sigma}_{\tilde{u}} \ar[r]^-{\psi^{\sigma} \times_{\bs}\psi^{\sigma} \times_{\bt}\psi^{\sigma}} & \cG \times_{\bs} \cG \times_{\bt} \cG 
} 
\end{eqnarray}
The explicit formula of $\phi^{\sigma}_{\tilde{u}}$ is \eqref{eqn:formula}.
\end{itemize}

\begin{definition}\label{dfn:bisubmalgdrank}
The bisubmersion $(U^{\sigma},\psi^{\sigma},\cG)$ of $A\cG$ is called the \textit{path-holonomy bi-submersion of $A\cG$ arising from the local frame $\sigma$}. 
\end{definition}
Notice that path-holonomy bi-submersions arising from local frames of $A\cG$ are also path-holonomy bi-submersions of the foliation $(M,\cF)$ defined by the Lie algebroid $A\cG$. The map $\psi$ is defined by exponentiating the right-invariant vector fields associated with the frame $\sigma$. So in this construction, the integrability of $A\cG$ is crucial in the construction of bi-submersions as such.

Also notice that the commutativity of diagram \ref{diag:constrank} follows automatically from the explicit formulas of the maps involved. Diagram \ref{diag:constrank} binds together the definition of bi-submersion \ref{dfn:relbisubm} and its refomulation in Proposition \ref{prop:bisubmfolgpd1}, with the algebraic reformulation of bi-submersion we gave in Theorem \ref{prop:bisub}.  






\section{The path-holonomy bi-submersion of a non-integrable Lie algebroid}\label{sec:pathhol}

In \S \ref{sec:dfnWA} we give the definition of a bi-submersion for a Lie algebroid and in \ref{sec:constrWA}, we give the explicit construction of a bi-submersion as such. The existence of bi-submersions as such and the construction in \cite[Appendix A]{Za18} show that we can apply the $C^{\ast}$-functor to any Lie algebroid, regardless its integrability.

\subsection{Definition of bi-submersion for a Lie algebroid}\label{sec:dfnWA}

Let us make a fresh start. Let $A$ be a Lie algebroid over $M$, which is not necessarily integrable. Put $(M,\cF)$ the singular foliation it induces. Let $W(A) \gpd M$ be the Weinstein groupoid of $A$. Put $\bs, \bt : W(A) \to M$ its source and target maps.

\begin{definition}\label{dfn:bisubmnonint}
A bi-submersion of $A$ is a pair $((U,\bs,\bt),\psi)$, where:
\begin{enumerate}
\item $(U,\bs_U,\bt_U)$ is a bi-submersion of the foliation $(M,\cF)$ in the sense of \ref{dfn:bisub2}.
\item $\psi : U \to W(A)$ is an open map such that $\bs\circ\psi=\psi_U$ and $\bt\circ\psi=\bt_U$.
\item For every $u \in U$ the following diagram commutes:
\begin{eqnarray}\label{diag:weinstein}
\xymatrixcolsep{4pc}\xymatrix{
 W_{u} \ar[r]^-{\psi \times_{\bs}\psi \times_{\bt}\psi} \ar[d]_{\phi_{u}} & W(A) \times_{\bs} W(A) \times_{\bt} W(A) \ar[d]^{\phi_{W(A)}} \\
 W_{u} \ar[r]^-{\psi \times_{\bs}\psi \times_{\bt}\psi} & W(A) \times_{\bs} W(A) \times_{\bt} W(A) 
} 
\end{eqnarray}
\end{enumerate}
\end{definition}

\subsection{Construction of path-holonomy bisubmersions}\label{sec:constrWA}

Now we give an explicit construction of a bi-submersion as in Dfn. \ref{dfn:bisubmnonint}, arising from the choice of a local frame for the non-integrable Lie algebroid $A$. 

Let us fix a point $x \in M$ and an open neighbourhood $V$ of $x$ in $M$. Put $L$ the leaf of the algebroid $A$ at $x$ and $\cF_x$, $\h_x$ the fibers of $\cF_V=\rho(\Gamma_c(V,A))$ and $\h_V$ respectively. Choose a local frame $\mu_1,\ldots \mu_n, \nu_1,\ldots \nu_k$ of the vector bundle $A_V$ such that $\{\xi_i = \rho(\mu_i) : 1\leq i \leq n\}$ is a minimal set of generators of $\cF_V$ and $\{\nu_i : 1\leq i \leq k\}$ a minimal set of generators of $\h_V$. In other words, the classes $[\mu_1],\ldots,[\mu_n]$ are a basis of $\cF_x$ and the classes $[\nu_1],\ldots,[\nu_k]$ are a basis of $\h_x$. We also choose a smooth fiberwise inner product of the vector bundle $A$, to make possible the constructions in \S \ref{sec:Apathspl}.

Recall the bi-submersions constructed in examples \ref{ex:add} and \ref{ex:isotropy}:
\begin{itemize}
\item $(V \times B^n,\bt,\bs)$ where $B^n$ is a small ball in $\cF_x$ whose center is zero, $\bs$ is the projection and $\bt$ is the exponential defined using $\xi_1,\ldots,\xi_n$.
\item $(U,t,s)$ where $U$ is a small neighborhood of $(0,e)$ in $V \times H_x$, where $H_x$ is the connected and simply connected Lie group which integrates $\h_x$. Also $s=t$ is the projection.
\end{itemize}

Now consider the fibered product $$Z = (V \times B^n) \times_{\bt,s} U = V \times B^n \times \tilde{H}_x$$ where $\tilde{H}_x$ is a small neighbourhood of $H_x$ at the identity. Put  $s_Z : Z \to M$ the projection and define $t_Z : Z \to M$ by $t(y,\overrightarrow{\lambda},g) = \bt(y,\overrightarrow{\lambda})$ for every $\overrightarrow{\lambda} = (\lambda_1,\ldots,\lambda_n) \in B^n$. Both of these maps are submersions.

\begin{prop}\label{prop:bisubmWeinstein1}
The triple $(Z,t_Z,s_Z)$ is a bi-submersion. The underlying foliation is $\cF_V$.
\end{prop}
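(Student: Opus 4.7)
The strategy is to identify the $C^\infty(Z)$-module
\[\cF_Z := \Gamma_c(Z;\ker ds_Z) + \Gamma_c(Z;\ker dt_Z)\]
with the pullback $s_Z^{-1}(\cF_V)$ (equivalently $t_Z^{-1}(\cF_V)$), and then invoke Proposition~\ref{prop:uniquefol} to conclude both that $(Z,t_Z,s_Z)$ is a bi-submersion and that the underlying foliation is $\cF_V$.

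First I would unpack $\ker ds_Z$ and $\ker dt_Z$ using the splitting $TZ \cong TV\oplus TB^n\oplus T\tilde H_x$. Since $s_Z$ is the projection onto $V$, one has $\ker ds_Z = TB^n \oplus T\tilde H_x$. Writing $\pi_1 : Z \to V \times B^n$ for the projection that forgets $\tilde H_x$, the definition gives $s_Z = \bs\circ\pi_1$ and $t_Z = \bt\circ\pi_1$, so $t_Z$ is a submersion and $\ker dt_Z = \ker d\bt \oplus T\tilde H_x$. Summing yields
\[\cF_Z = \Gamma_c(Z;TB^n) + \Gamma_c(Z;\ker d\bt) + \Gamma_c(Z;T\tilde H_x).\]

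Next I would recall that $(V\times B^n,\bt,\bs)$ is the path-holonomy bi-submersion of $\cF_V$ (Ex.~\ref{ex:pathol}), so $\bs^{-1}(\cF_V) = \bt^{-1}(\cF_V) = \Gamma_c(V\times B^n;TB^n) + \Gamma_c(V\times B^n;\ker d\bt)$. Applying the converse half of Lemma~\ref{lem:submfol} to the surjective submersion $\pi_1$ with connected fibres $\tilde H_x$, the pullback $\pi_1^{-1}(\bs^{-1}(\cF_V))$ is generated by $\Gamma_c(Z;\ker d\pi_1) = \Gamma_c(Z;T\tilde H_x)$ together with lifts of generators of $\bs^{-1}(\cF_V)$; these coincide on the nose with the three summands in $\cF_Z$ above. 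Hence $s_Z^{-1}(\cF_V) = \cF_Z$, and the same computation applied to $t_Z=\bt\circ\pi_1$ (using $\bt^{-1}(\cF_V) = \bs^{-1}(\cF_V)$) gives $t_Z^{-1}(\cF_V) = \cF_Z$. Proposition~\ref{prop:uniquefol} then immediately yields that $(Z,t_Z,s_Z)$ is a bi-submersion, and uniqueness of the foliation produced on the target side forces it to be $\cF_V$.

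I do not anticipate a real obstacle: the extra factor $\tilde H_x$ is innocuous precisely because it sits inside both $\ker ds_Z$ and $\ker dt_Z$, so the situation is a trivial extension of the classical path-holonomy bi-submersion on $V\times B^n$. The only point requiring genuine care is the explicit bookkeeping for $\pi_1^{-1}$, namely verifying that pulling back a module of the form $\Gamma_c(\ker d\bs)+\Gamma_c(\ker d\bt)$ through a submersion with connected fibres really reproduces the analogous sum on $Z$; this is immediate once one writes Lemma~\ref{lem:submfol} out explicitly on local generators.
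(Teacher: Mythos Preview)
Your argument is correct but follows a different route from the paper. The paper proves Proposition~\ref{prop:bisubmWeinstein1} via the algebraic reformulation of Theorem~\ref{prop:bisub}: it writes down the map $\phi_Z$ explicitly, combining the additive formula of Example~\ref{ex:add} on the $B^n$ factor with the group-multiplication formula of Example~\ref{ex:isotropy} on the $\tilde H_x$ factor, and observes that this map satisfies condition~(b) of Theorem~\ref{prop:bisub}. You instead verify the original module-theoretic condition (Definition~\ref{dfn:bisubmfol}) directly, exploiting that the $\tilde H_x$ direction sits inside both $\ker ds_Z$ and $\ker dt_Z$, so that the bi-submersion property for $Z$ reduces to the known one for the path-holonomy bi-submersion $V\times B^n$.

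Your approach is more elementary for this proposition taken in isolation. The paper's approach, however, does double duty: the explicit map $\phi_Z$ it constructs is precisely what is required in item~(c) of Definition~\ref{dfn:bisubmnonint} and in the proof of Proposition~\ref{prop:bisubmWeinstein2}, where one must check that diagram~\eqref{diag:weinstein} commutes. With your argument you would still need to produce $\phi_Z$ separately at that stage.

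One small cleanup: the appeal to Proposition~\ref{prop:uniquefol} at the end is unnecessary (and that proposition carries a surjectivity hypothesis that $t_Z$ need not satisfy). Once you have established $s_Z^{-1}(\cF_V)=t_Z^{-1}(\cF_V)=\cF_Z$, you have verified Definition~\ref{dfn:bisubmfol} directly, and the underlying foliation is $\cF_V$ by construction.
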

\begin{proof}
Fix the point $\tilde{u} = (x,0,e)$ in $Z$ and consider (up to shrinking $Z$ if necessary) the map $\phi_{Z} : Z \times_{s_Z} Z \times_{t_Z} Z \to Z \times_{s_Z} Z \times_{t_Z} Z$ defined by: 
\begin{equation*}
\begin{split}
& \phi_{Z}((y_1,\overrightarrow{\lambda}_1,g_1),(y_2,\overrightarrow{\lambda}_2,g_2),(y_3,\overrightarrow{\lambda}_3,g_3)) = \\  & ((y_3,\overrightarrow{\lambda}_3,g_3),(y_2,(\overrightarrow{\lambda}_1 - \overrightarrow{\lambda}_2 + \overrightarrow{\lambda}_3),g_1 g_2^{-1} g_3),(y_1,\overrightarrow{\lambda}_1,g_1))
\end{split}
\end{equation*}
It is straightforward that this map satisfies theorem \ref{prop:bisub}.
\end{proof}

\begin{remark}
If $x \in M_A^c$ we have $\h_x = 0$, in other words $k=0$ in the above construction. In this case, $(Z,t_Z,s_Z)$ is just the path-holonomy bisubmersion $(V \times B^n,\bt,\bs)$. This and Prop. \ref{prop:bisubmWeinstein2} justify item (j) in \S \ref{sec:extensions}.
\end{remark}

\begin{remark}
Let $(M,\pi)$ be a Poisson manifold and consider the Lie algebroid structure on the cotangent bundle $T^* M$. Recall \cite[Prop. 2.1]{AZ3} that the Lie algebra $\h_x$ is abelian in this case, so the group element $g_1 g_2^{-1} g_3$ is understood in the additive sense.
\end{remark}

\begin{prop}\label{prop:bisubmWeinstein2}
There is a map $\psi : Z \to W(A)$ which makes diagram \ref{diag:relbisubm} commute. (Replacing $\cG$ by $W(A)$, $U$ with $Z$ and $\phi_{\tilde{u}}$ with $\phi_Z$ in that diagram.)
\end{prop}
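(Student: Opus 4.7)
The plan is to build $\psi$ explicitly on each point of $Z = V \times B^n \times \tilde{H}_x$ as the $A$-homotopy class of a small $A$-path obtained via the splitting $\sigma\oplus j$, and then to reduce the commutativity of the diagram to a concatenation identity for $A$-paths that can be checked fiberwise in the local Lie algebra $\cF_x\times\h_x$.

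First, I would construct $\psi$ as follows. Given $z=(y,\vec{\lambda},g)\in Z$ with $\vec{\lambda}=(\lambda_1,\dots,\lambda_n)\in B^n\subset\cF_x$ and $g=\exp(\sum_{j=1}^k\mu_j\langle\nu_j\rangle)$ for a unique $\vec{\mu}\in\R^k$ close to $0$, consider the constant paths $\alpha_x^{\cF}(t)=\sum_i\lambda_i\langle\mu_i\rangle\in\cF_x$ and $\alpha_x^{\h}(t)=\sum_j\mu_j\langle\nu_j\rangle\in\h_x$. Proposition \ref{prop:Apath2} yields a unique $A^{\cF}_{L\cap V}$-path $(\gamma_y,\alpha_x^{\cF})$ starting at $y$, with base path $\gamma_y(t)=\exp(t\sum_i\lambda_i\xi_i)(y)$. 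Pairing it with $(\gamma_y,\alpha_x^{\h})$, which is an $\h_{L\cap V}$-path over $\gamma_y$ via the trivialisation $\h_{L\cap V}\cong(L\cap V)\times\h_x$, and applying the vector bundle isomorphism $\sigma\oplus j$, I obtain an $A_{L\cap V}$-path $\alpha_z$. I then define $\psi(z):=[\alpha_z]\in W(A)$. The compatibilities $\bs\circ\psi=s_Z$ and $\bt\circ\psi=t_Z$ follow immediately, since $\alpha_z$ starts at the zero of $A_y$ and its base path ends at $\bt(y,\vec\lambda)$. Smoothness (and openness) of $\psi$ is inherited from the smoothness of the construction in $P(A)$, together with the fact that $\psi$ is essentially a local parametrisation of $W(A)$ by the exponential along the local frame of $A_V$.

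For the commutativity of diagram \eqref{diag:weinstein} with $\phi_Z$ from the proof of Proposition \ref{prop:bisubmWeinstein1}, the first and third entries of $\phi_{W(A)}\circ(\psi\times_\bs\psi\times_\bt\psi)$ match $(\psi\times_\bs\psi\times_\bt\psi)\circ\phi_Z$ tautologically. It remains to verify that the middle entry equals, namely, that for $z_1,z_2,z_3$ with matching source/target
\[
\psi(z_1)\,\psi(z_2)^{-1}\,\psi(z_3) = \psi\!\left(y_2,\ \vec{\lambda}_1-\vec{\lambda}_2+\vec{\lambda}_3,\ g_1 g_2^{-1} g_3\right) \quad \text{in }W(A).
\]
Since the product in $W(A)$ is concatenation of $A$-paths modulo $A$-homotopy, the left-hand side is the class of $\alpha_{z_1}*\overline{\alpha_{z_2}}*\alpha_{z_3}$. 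By Remark \ref{rmk:Apath} this equality reduces to a homotopy of paths in $P_0(\cF_x\times\h_x)$ in the Duistermaat--Kolk sense, and by the splitting to two independent homotopies: one in the abelian algebra $\cF_x$, where concatenation of constant paths with values $\vec{\lambda}_1,-\vec{\lambda}_2,\vec{\lambda}_3$ is homotopic to the constant path with value $\vec{\lambda}_1-\vec{\lambda}_2+\vec{\lambda}_3$; and one in $\h_x$ with simply connected integration $H_x$, where the analogous concatenation is homotopic to the constant path whose exponential equals $g_1g_2^{-1}g_3$ (this is exactly \cite[Prop.~1.13.4]{DuistKolk} applied to $H_x$).

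The main obstacle I expect is controlling the curvature term $R_\sigma$ appearing in the bracket \eqref{eqn:bracket} when one transports the concatenation of $A$-paths through the isomorphism $\sigma\oplus j$. One has to check that the passage from the concatenated pair in $\cF_x\times\h_x$ back to an $A_{L\cap V}$-path via $\sigma\oplus j$ differs from $\alpha_{z'}$ only by an $A$-homotopy, i.e.\ that the contribution of $R_\sigma$ to the $\h$-component is a boundary in the Duistermaat--Kolk homotopy sense. This is handled by shrinking $Z$ so that all endpoints lie in a single chart of $L$ and by invoking Remark \ref{rmk:Apath}, which already packages the independence on the choice of splitting up to $A$-homotopy. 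Once this is in place, the verification of diagram \eqref{diag:weinstein} is immediate.
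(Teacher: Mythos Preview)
Your approach is essentially the same as the paper's: define $\psi(y,\vec\lambda,g)$ by lifting $\vec\lambda$ and $g$ to paths in $\cF_x$ and $\h_x$ via the Duistermaat--Kolk description, combining them through Proposition~\ref{prop:Apath2} and the splitting $\sigma\oplus j$ into an $A_{L\cap V}$-path, and taking its $A$-homotopy class; well-definedness and the commutativity of the diagram are both delegated to Remark~\ref{rmk:Apath}. The paper's proof is terser (it simply says ``commutativity follows from the construction''), whereas you spell out the reduction to two separate Duistermaat--Kolk homotopies in $\cF_x$ and $\h_x$ and flag the curvature $R_\sigma$ as the only nontrivial point---which is indeed already absorbed into Remark~\ref{rmk:Apath}. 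One minor point: your constant paths $\alpha^{\cF}_x(t)=\vec\lambda$ and $\alpha^{\h}_x(t)=\sum_j\mu_j\langle\nu_j\rangle$ do not lie in $P_0$ in the paper's sense (paths starting at~$0$); the paper instead takes arbitrary Duistermaat--Kolk lifts $\alpha_{\vec\lambda}\in P_0(\R^n)$, $\alpha_g\in P_0(\h_x)$, which is why it needs Remark~\ref{rmk:Apath} already for well-definedness of $\psi$, not only for the diagram.
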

\begin{proof}
Let $\g$ be a finite-dimensional Lie algebra. Recall \cite{DuistKolk} that the connected and simply connected Lie group $G$ which integrates $\g$ is the quotient $P_0(\g)/\sim$ of the Banach manifold $P_0(\g)$ of paths in $\g$ by the homotopy relation defined in \cite[Prop. 1.13.4]{DuistKolk}. Whence, for any element $(y,\overrightarrow{\lambda},g)$ in $Z$ there are lifts $\alpha_{\overrightarrow{\lambda}} \in P_0(\R^n)$ and $\alpha_{g} \in P_0(\h_x)$ of $\overrightarrow{\lambda}$ and $g$ respectively. Because of Prop. \ref{prop:Apath2}, every element $(y,\alpha_{\overrightarrow{\lambda}} \oplus \alpha_{g})$ defines an $A$-path in the Lie algebroid $A^{\cF}_{L \cap V} \oplus \h_{L \cap V}$, whence an $A$-path in $A_{L \cap V}$. Put $\psi(y,\overrightarrow{\lambda},g) \in W(A)$ the $A$-homotopy class of this path. The map $\psi$ is well defined because of Remark \ref{rmk:Apath}. The commutativity of diagram \ref{diag:relbisubm} follows from the construction.
\end{proof}

Propositions \ref{prop:bisubmWeinstein1} and \ref{prop:bisubmWeinstein2} justify the next definition.

\begin{definition}\label{dfn:bisubmWeinstein}
Let $(Z,s_Z,t_Z)$ the triple constructed in Prop. \ref{prop:bisubmWeinstein1}. The map $\psi : Z \to W(A)$ is called the minimal path-holonomy bi-submersion of the Weinstein groupoid $W(A)$ at the point $x$ in $M$.
\end{definition}

\section{Questions to be explored in the future}

There existence of path-holonomy bi-submersions for the Weinstein groupoid opens the way for a much more thorough treatment of non-integrable Lie algebroids. That is, the development of both Lie theory and Noncommutative Geometry for algebroids as such. Here are a few of the results we intend to give in the future:

\begin{enumerate}
\item It is well known that every Lie algebroid $A$ integrates to a local Lie groupoid $\cG(A) \to M$. The map $\psi : Z \to W(A)$ we constructed in \S \ref{sec:constrWA} can probably be extended to a map $\cG(A) \to W(A)$, which ``differentiates'' to $\id : A \to A$. This differentiation process is explained in \cite{AnZa20}.
\item The construction in \cite[Appendix A]{Za18} shows that, using the path-holonomy bi-submersions in Definition \ref{dfn:bisubmWeinstein}, we can attach $C^*$-algebras (full and reduced) to a Lie algebroid $A$. Thanks to items (i), (j) and (k) in \S \ref{sec:extensions} as well as \cite[Notation 1.2]{AS4}, the \textit{full} $C^{\ast}$-algebra of $A$ is an exact sequence: $$0 \to C^{\ast}(A_{M_A}) \to C^{\ast}(A) \to C^{\ast}(A_{M_A^c}) \to 0$$ The $K$-theory of $C^{\ast}(A_{M_A^c})$ is computable because the restriction of $W(A)$ to $M_A^c$ is a Lie groupoid. 
\item Also, the decomposition $M = M_A \cup M_A^c$ makes possible the application of the blow-up constructions in \cite{Mohsen2021}. This paves the way for the development of the appropriate pseudodifferential calculus and index theory for non-integrable Lie algebroids. We intend to carry out this program in forthcoming papers.
\end{enumerate}

\appendix

\section{The proof of Theorem \ref{prop:bisub}}\label{appendix}

Recall that we just need to prove the converse direction of Theorem \ref{prop:bisub}.

As being a bi-submersion is a local condition, we
fix $\tilde{u}\in U$ and work in a neighborhood of it.
 Fix a basis of sections $\{X_1,\ldots,X_n\}$ of $\Gamma(U;\ker d\bs)$ defined in a neighbourhood $\hat{U}$ of  $\tilde{u}$, so that all $X_i$ commute with each other.
For any $u\in \tilde{U}$ and $\xi\in \RR^n$ small enough put $\alpha_{\xi}(u)=exp(\sum_{i=1}^n \xi_i X_i)(u)$. 

Likewise, fix a basis of pairwise commuting sections $\{Y_1,\ldots,Y_n\}$ of $\Gamma(U;\ker d\bt)$ on $\hat{U}$. Define  $\beta_{\eta}(u)=exp(\sum_{i=1}^n \eta_i Y_i)(u)$. We assume that  there is 
a non-empty open neighborhood $\tilde{U} $ of $\tilde{u}$ in $U$ 
such that 
$\alpha_{\xi}(u)$ and 
$\beta_{\eta}(u)$ are well-defined for all $\xi,\eta\in B_n$ (this can be always arranged rescaling suitably the $X_i$ and $Y_i$).

This defines a map
\begin{align*}
\chi : B_n \times 
\tilde{U} \times B_n &\to U \times_s U \times_t U,\\
(\xi,u,\eta)&\mapsto (\alpha_{\xi}(u),u, \beta_{\eta}(u)),
\end{align*}
which is a diffeomorphism onto a neighborhood of $(\tilde{u},\tilde{u},\tilde{u})$ in $W$ (schrinking $U$ if necessary). Under this diffeomorphism, the map $\phi_{\tilde{u}}$ becomes $$\phi_{\tilde{u}} : B_n \times \tilde{U}  \times B_n \to \RR^n \times U \times \RR^n, \quad \phi_{\tilde{u}}(\xi,u,\eta)=(\xi',v,\eta'),$$ where each one of the $\xi', v, \eta'$ depends on $(\xi,u,\eta)$. This is a diffeomorphism onto a neighborhood of $(0,\tilde{u},0)$.
Once we restrict properly the domains, the  properties in Definition \ref{dfn:bisub2} are equivalent to: 
\begin{align}
\label{0u0}\Phi(0,u,0)=(0,u,0) \text{ for all  }u\in \tilde{U} \\
\label{ab}\alpha_{\xi}(u)=\beta_{\eta'}(v) \text{ and } \beta_{\eta}(u) = \alpha_{\xi'}(v) \text{ for all  $u\in \tilde{U}$  and $\xi,\eta$ small enough}.
\end{align}


After this preparation, we can start with the proof. Fix $\xi, \eta \in B_n$ and denote $$X := \sum_{i=1}^n \xi_i X_i \in \Gamma(U;\ker d\bs), \qquad Y:=\sum_{j=1}^n \eta_j Y_j \in \Gamma(U;\ker d\bt)$$ 
\noindent
\underline{\textbf{Claim}}: The vector $[X,Y](\tilde{u})$ lives in $\ker d_{\tilde{u}}s + \ker d_{\tilde{u}}t$. 
\\ \\ 
To prove this, let us denote $\varphi^X_t$ and $\varphi^Y_t$ the time-$t$ flows of $X$ and $Y$ respectively. For the moment, also fix $t\in \RR$ and put $u_t = \varphi^Y_t \phi^X_t(\tilde{u})$. Then $\varphi^X_{-t}(u_t) = \varphi^X_{-t}\varphi^Y_t\varphi^X_t(\tilde{u})$ and $\varphi^Y_{-t}(u_t)=\varphi^X_t(\tilde{u})$. We have $$(\varphi^X_{-t}(u_t),u_t,\varphi^Y_{-t}(u_t)) \in U \times_s U \times_t U$$ In coordinates (\ie applying $\chi$), this element becomes $$w = (-t\xi,u_t,-t\eta).$$
 Applying the map $\phi_{\tilde{u}}$, we find $v_t \in U$ and $\tilde{\xi}_t,\tilde{\eta}_t \in B_n$ such that 
 $$\phi_{\tilde{u}}(-t\xi,u_t,-t\eta) = (\tilde{\xi}_t,v_t,\tilde{\eta}_t)$$  
By Eq. \eqref{0u0} we have $ \tilde{\xi}_0=0$ and $\tilde{\eta}_0=0$.
Therefore there are smooth curves $\xi'_t$ and $\eta_t'$ in $\RR^n$ such that $\tilde{\xi}_t= t\xi'_t$ and   $\tilde{\eta}_t=t\eta'_t$, \ie 
 $$\Phi(-t\xi,u_t,-t\eta) = (t\xi'_t,v_t,t\eta'_t).$$
 So Eq. \eqref{ab} implies 
 \begin{equation}
\label{eq:abt}\alpha_{t\xi'_t}(v_t)=\beta_{-t\eta}(u_t) \text{ and } \beta_{t\eta'_t}(v_t)=\alpha_{-t\xi}(u_t).
\end{equation}
  Since $u_t = \beta_{t\eta}\alpha_{t\xi}(\tilde{u})$, the first of Eq. \eqref{eq:abt} gives: 
\begin{gather*}
\alpha_{t\xi'_t}(v_t)=\beta_{-t\eta}(u_t)=\beta_{-t\eta}\beta_{t\eta}\alpha_{t\xi}(\tilde{u}) = \alpha_{t\xi}(\tilde{u}) \\
\Rightarrow v_t=\alpha_{-t\xi'_t}\alpha_{t\xi}(\tilde{u})
\end{gather*}
The latter equation, together with the second of Eq. \eqref{eq:abt} imply 
\begin{eqnarray}\label{eqn1}
\beta_{-t\eta}\alpha_{-t\xi}\beta_{t\eta}\alpha_{t\xi}(\tilde{u}) = 
\beta_{-t\eta}\alpha_{-t\xi}(u_t) = 
\beta_{-t\eta}\beta_{t\eta'_t}(v_t) = \beta_{-t\eta}\beta_{t\eta'_t}\alpha_{-t\xi'_t}\alpha_{t\xi}(\tilde{u}) 
\end{eqnarray}

Since the $X_i$ are pairwise commuting and the same holds for the $Y_i$, this equals $\beta_{t(-\eta+\eta'_t)}\alpha_{t(\xi-\xi'_t)}(\tilde{u})$.
Write 
$$X_t:=\sum_{i=1}^n (\xi_i-(\xi'_t)_i) X_i \in \Gamma(\tilde{U};\ker d\bs)\text{ and } Y_t:=\sum_{i=1}^n (-\eta_i+(\eta'_t)_i) Y_i\in \Gamma(\tilde{U};\ker d\bt).$$ 
Due to the Baker-Campbell-Hasudorff formula, the expression $\beta_{t(-\eta+\eta'_t)}\alpha_{t(\xi-\xi'_t)}(\tilde{u})$ is exactly the time-1 flow $exp_{\tilde{u}}(Z_t)$ for the time-dependent vector field $$Z_t=tX_t+tY_t+[tX_t,tY_t]+\dots=t(X_t+Y_t)+o(t^2)$$

Now let $t$ vary.
Taking the second time derivative at zero of eq. \eqref{eqn1} we obtain
$[X,Y]_{\tilde{u}}=\frac{1}{2}\frac{d^2}{dt^2}|_{t=0}exp_{\tilde{u}}(Z_t)$.

\begin{lemma}\label{lem:accvel}
$$\frac{d^2}{dt^2}|_{t=0}exp_{\tilde{u}}(Z_t) =\frac{d}{dt}|_{t=0}Z_t(\tilde{u}).$$
\end{lemma}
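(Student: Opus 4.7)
The plan is to interpret $\exp_{\tilde{u}}(Z_t)$ as the integral curve at time $t$ of the time-dependent vector field $Z_t$ (with time parameter $t$ itself) starting at $\tilde{u}$ at $t=0$. Under that reading the identity reduces to a one-line chain-rule computation, provided the starting velocity of this curve vanishes.

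First I would record the key structural fact $Z_0 = 0$, which is immediate from the formula $Z_t = t(X_t+Y_t) + o(t^2)$ written just before the lemma: every summand carries at least one factor of $t$, so the vector field vanishes identically at $t=0$. Setting $c(t) := \exp_{\tilde{u}}(Z_t)$, the defining property of the flow then gives $c(0) = \tilde{u}$ together with $c'(t) = Z_t(c(t))$, and hence $c'(0) = Z_0(\tilde{u}) = 0$.

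The next step is to differentiate the defining ODE $c'(t) = Z_t(c(t))$ once more in $t$ by the chain rule:
\[
c''(t) = \left.\frac{d}{d\tau}\right|_{\tau = t} Z_\tau(c(t)) + (dZ_t)|_{c(t)}\bigl[c'(t)\bigr].
\]
Evaluating at $t=0$, the second summand drops out because $c'(0)=0$, leaving
\[
c''(0) = \left.\frac{d}{dt}\right|_{t=0} Z_t(\tilde{u}),
\]
which is exactly the identity in the lemma.

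The only delicate point is the vanishing $c'(0) = 0$, because it is what both makes $c''(0)$ an intrinsic tangent vector at $\tilde{u}$ and eliminates the cross term $(dZ_0)|_{\tilde{u}} \cdot Z_0(\tilde{u})$ from the chain rule. Here the formal vanishing of $Z_0$ as a vector field, not merely at $\tilde{u}$, handles both issues at once, so I do not expect any real obstacle; the lemma is essentially a formality once one adopts this time-dependent viewpoint for $Z_t$ and reads off the first-order coefficient from the explicit expression for $Z_t$.
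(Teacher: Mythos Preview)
Your argument is correct and is essentially the same as the paper's: both interpret $\exp_{\tilde u}(Z_t)$ as the integral curve $c$ of the time-dependent field $Z_t$, use $Z_0=0$ to get $c'(0)=0$, and then differentiate $c'(t)=Z_t(c(t))$ once more so that the cross term drops out. The only cosmetic difference is that the paper phrases the second differentiation via test functions $f\in C^\infty(U)$ (computing $\frac{d^2}{dt^2}\big|_{t=0}(f\circ c)$ as a derivation) rather than writing the chain rule directly with $(dZ_t)\,[c'(t)]$, precisely to make explicit that $c''(0)$ is a well-defined tangent vector at $\tilde u$.
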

\begin{proof} Consider the curve $t\mapsto \gamma(t):=exp_{\tilde{u}}(Z_t)$, \ie the unique curve satisfying $\gamma(0)=\tilde{u}$ and $
\frac{d}{dt}\gamma(t)=(Z_t)(\gamma(t))$. We have $\frac{d}{dt}|_{t=0}\gamma(t)=0$, so the acceleration $\frac{d^2}{dt^2}|_{t=0}\gamma(t)$
defines a  well-defined element of $T_{\tilde{u}}U$. More precisely, it
 is the tangent vector corresponding to the derivation $$C^{\infty}(U)\to \RR, f\mapsto \frac{d^2}{dt^2}|_{t=0}(f\circ\gamma)(t).$$

To make this explicit we compute
\begin{align}
\frac{d^2}{dt^2}|_{t=0}(f\circ exp_{\tilde{u}}(Z_t))&=
 \frac{d}{dt}|_{t=0}  f_*((Z_t)(\gamma(t)))\\
 &=\frac{d}{dt}|_{t=0}[ f_*((Z_t)(\gamma(0)))+
 f_*((Z_0)(\gamma(t)))]\\
 &= \frac{d}{dt}|_{t=0} f_*((Z_t)(\tilde{u}))\\
 &=f_*\big(\frac{d}{dt}|_{t=0} (Z_t)(\tilde{u})\big)
\end{align}
 where in the second last equality we used $Z_0=0$. 
 \end{proof}

We compute easily $\frac{d}{dt}|_{t=0}Z_t(\tilde{u})=(X_0+Y_0)(\tilde{u})$, which lies in  $(\ker d_{\tilde{u}}s+\ker d_{\tilde{u}}t)$. As the Lie bracket is a derivation (w.r.t. multiplication by functions) in each entry, and
 the point $\tilde{u}\in U$ is arbitrary, we conclude that 
$\cF_U = \Gamma_c(U;\ker d\bs) + \Gamma_c(U;\ker d\bt)$ is a foliation (stable by the Lie bracket).

\begin{remark}
An example for Lemma \ref{lem:accvel} is the vector field $Z_t = t\partial_x$ on $\R$.
\end{remark}



\end{document}